\newtheorem{thm}{Theorem}[section]
\newtheorem{cor}[thm]{Corollary}
\newtheorem{lem}[thm]{Lemma}
\newtheorem{exa}[thm]{Example}
\theoremstyle{definition}
\newtheorem{dfn}[thm]{Definition}
\theoremstyle{remark}
\newtheorem{rem}[thm]{Remark}
\numberwithin{equation}{section}
\begin{document}

\title[]{The group $G_{n}^{2}$ and Invariants of Free Knots Valued in Free Groups}%
\author{S.Kim, V.O.Manturov}
\address{V.O.Manturov, Bauman Moscow State Technical University,}
\email{vomanturov@yandex.ru}

\address{S.Kim, Bauman Moscow State Technical University,}%
\email{ksj19891120@gmail.com}%


\maketitle

\begin{abstract}
In the present paper, we define an invariant of free links valued in a free product of some copies of $\mathbb{Z}_{2}$. In \cite{Ma2} the second named author constructed a connection between classical braid group and group presentation generated by elements corresponding to horizontal trisecants. This approach does not apply to links nor tangles because it requires that when counting trisecants, we have the same number of points at each level. For general tangles, trisecants passing through one component twice may occur. Free links can be obtained from tangles by attaching two end points of each component. We shall construct an invariant of free links and free tangles valued in groups as follows: we associate elements in the groups with 4-valent vertices of free tangles(or free links). For a free link with enumerated component, we `read' all the intersections when traversing a given component and write them as a group element. The problem of `pure crossings' of a component with itself by using the following statement: {\em if two diagrams with no pure crossings are equivalent then they are equivalent by a sequence of moves where no intermediate diagram has a pure crossing.} This statement is a result of a sort that an equivalence relation within a subset coincides with the equivalence relation induced from a larger set and it is interesting by itself\\
{\bf Key words and phrases :} free link diagram, group presentation, free link invariant, bracket invariant\\
{\bf 2010 Mathematics Subject Classification: 57M25, 57M27 }
\end{abstract}

\section{Introduction and basic defnitions}
Virtual knots were introduced by Kauffman \cite{Ka} as knots in thickened surfaces considered up to isotopy and stabilization/destabilization. Virtual knots have simple diagrammatic descriptions by virtual diagrams and Reidemeister moves and detour moves. If we forget every structure at classical crossings except for framing, we get a simplification of virtual knots, called {\it free knots}. But it can be shown that there are non-trivial free knots by parity \cite{Ma3} and free knots are non-trivial enough.

Since the discovery of parity by the second named author \cite{Ma1}, the following principle was established:

{\em if a diagram $K$ is complicated enough then it realizes itself, i.e., it appears as a subdiagram in any
diagram $K'$ equivalent to $K$. }

This approach is realized by using invariants of free links valued in diagrams of free links.
One of them, called the parity bracket \cite{Ma1}, possesses the property $[K]=K$ for a diagram which is complicated enough is equal to this diagram itself. In particular, if $K'$ is equivalent to $K$ then $[K']=[K]=K$, which means by construction, that $K$ appears inside $K'$.
 This effects very similarly to the case of free groups: if, say, the word $abcba$  in $\mathbb{Z}*\mathbb{Z}*\mathbb{Z}$ is irreducible then it appears in any word equivalent to it (say, $abaa^{-1}b^{-1}bcbc^{3}c^{-3}a)$.
 One way of constructing a bridge between free knots and free groups possessing similar nice properties was undertaken in \cite{Ma2},\cite{MaNi}: starting from an element of the classical braid group, we got an element of some free group closely related to it via some group $G_{n}^{3}$. This works for the case of classical braids. However, the invariant constructed in \cite{MaNi} is not arranged for the case of tangles.
 
The aim of the present paper is to construct invariants of free $n-n$ tangles and links valued in free groups.
 
The main difference between arbitrary $n-n$ tangles and braids is the existence of pure crossings (between
a component and itself). However, this is not the only difference: there are diagrams of tangles with no pure crossings which cannot be represented by a closure of a braid, see Fig.~\ref{exa-tangle1}. In this Figure, intersections of two edges are classical crossings and intersections with circle are virtual crossings. There is an arc such that a part of the arc bounds a disk on the plane.

 
 \begin{figure}[h!]
\begin{center}
 \includegraphics[width = 6cm]{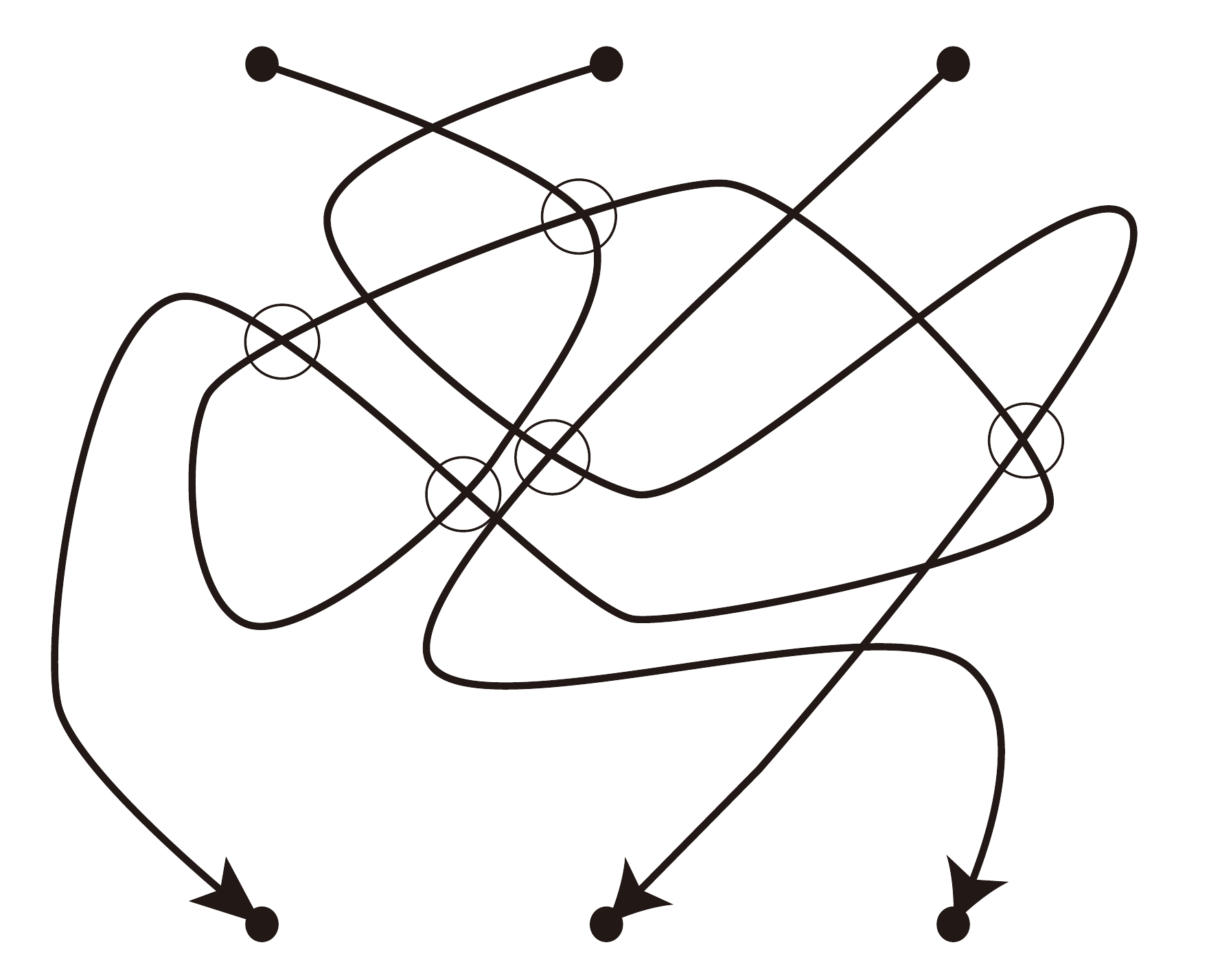}

\end{center}

 \caption{Free tangle diagram}\label{exa-tangle1}
\end{figure}
 
Unlike braids, $n-n$ tangles do not possess a group structure: there are no inverse elements, thus, they
are more difficult to work with.
 
We overcome this difficulty by showing that
 
{\em if two diagrams with no pure crossings are equivalent then they are equivalent by a sequence
of moves where no intermediate diagram has a pure crossing.}
 
 Theorem~\ref{Proj-link} is of an interest of its own: it is a result of the sort that the equivalence relation on a subset induced from a larger set coincides with the equivalence relation on the same subset induced by local equivalence relations within the subset.

Among the results of this sort, we mention the following ones:

1) if two classical links are equivalent as virtual links, then they are equivalent as classical links.

2) if two classical braids are equivalent as tangles, then they are equivalent as braids.
 
3) if two classical braids are equivalent as virtual braids, then they are equivalent as classical
braids.
 
The first result \cite{GoPoVi} and the second result are proved by classical methods (fundamental group). The third one is firstly proved in \cite{FeRiRo} and in \cite{El} it is proved by parity method .
 
On the other hand, the statement about virtual braids and virtual tangles still remains a conjecture.

Notice that a tangle may contain an arc such that a part of the arc bounds a circle on the plane, see Fig.~\ref{exa-tangle1-1}. 
 \begin{figure}[h!]
\begin{center}
 \includegraphics[width = 6cm]{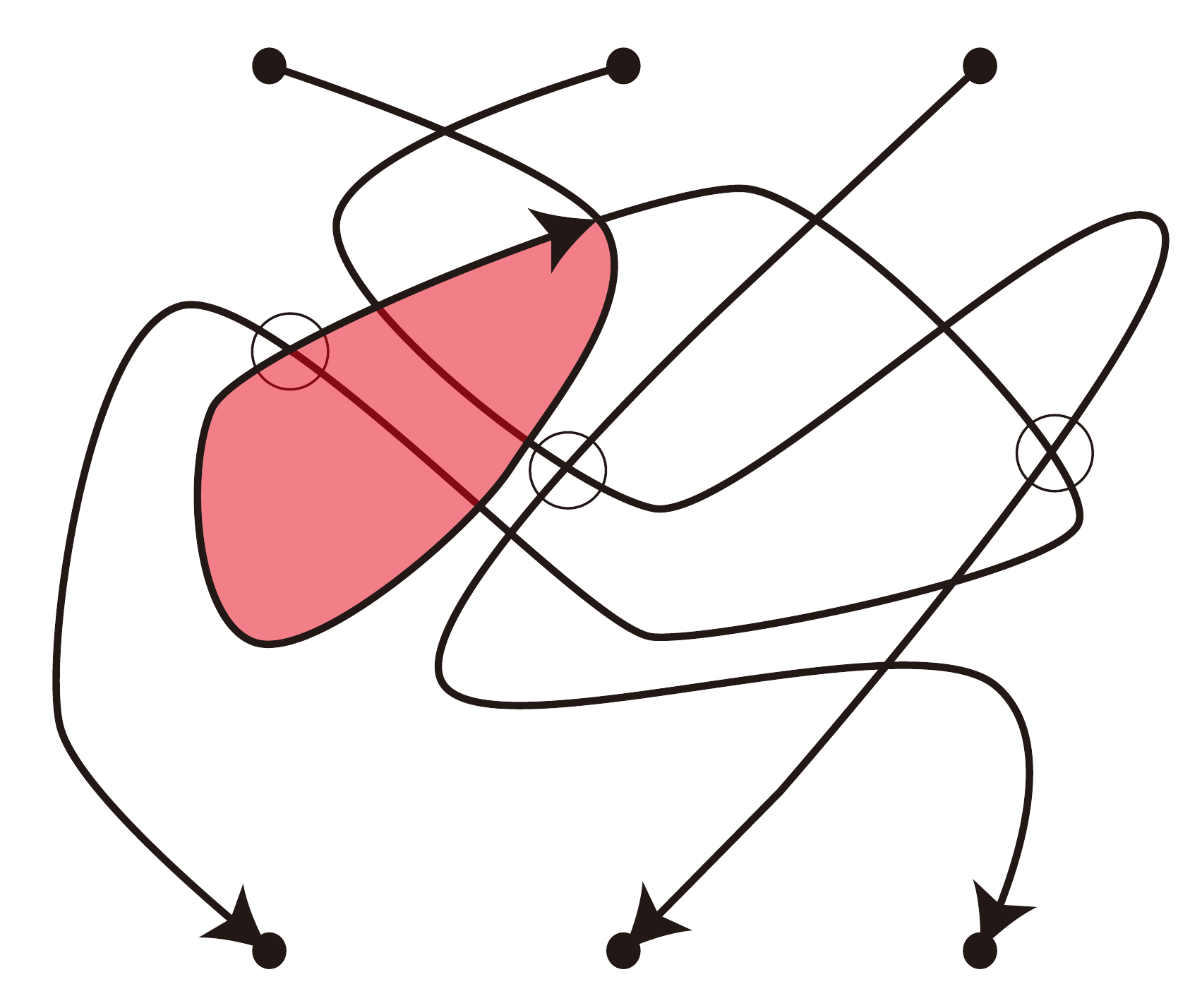}

\end{center}

 \caption{Free $n-n$ tangle diagram with a reverse arc}\label{exa-tangle1-1}
\end{figure}
Because of that, roughly speaking, there are no ``universal" orders for crossings of tangles and it is difficult to represent tangles by elements in some group as braids.

In the present paper for a $n-n$ tangle we fix a component of the tangle and read a `word' valued in group according to the orientation of the arc such that each character of the word is associated to a classical crossing. In Section 3 we are going to show that this word is well-defined, i.e, the map from the set of $n-n$ tangles to the group is an invariant under Reidemeister moves. For a link $L$, a $n-n$ tangle $T_{L}$ can be obtained by cutting each component of the link at fixed points $\{p_{i}\}$ and a word can be obtained from the $n-n$ tangle $T_{L}$. The word is an invariant under Reidemeister moves with corrections provided by the cut locus $\{p_{i}\}$.

Now we introduce basic definitions: By a {\it framed 4-graph with endpoints}  we mean a graph satisfying the followings:
 \begin{enumerate}
\item{every vertex is a 4-valent vertex except for $2n$ for some $n \in \mathbb{N} \cup \{0\}$ vertices which are 1-valent vertices.} 
\item{for each 4-valent vertex we fix a way of splitting of the four emanating half-edges into two pairs of edges called (formally) {\it opposite}}.
\end{enumerate}
Intersection of different edges of the framed 4-graph with endpoints in interior points are called {\it virtual crossings} and it is denoted by an intersection inside a circle, see Fig.~\ref{vir-cro}.

\begin{figure}[h!]
\begin{center}
 \includegraphics[width = 3cm]{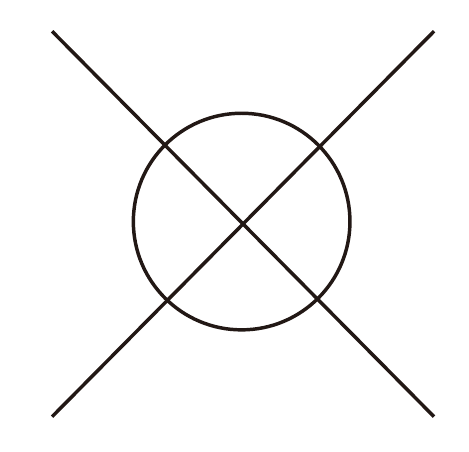}

\end{center}

 \caption{Virtual crossing}\label{vir-cro}
\end{figure}
One of examples of framed 4-graphs with endpoints is in Fig.~\ref{exa-sheeted-graph}. By abusing notation, we say ``graph" not only for graphs but also for disjoint collection of circles and for split sums of graphs with collections of circles. By vertices of graphs we also mean genuine vertices of graph components. We also admit empty graph as a framed 4-graph with endpoints.
 

We call 1-valent vertex in $\mathbb{R} \times \{1\}$(or in $\mathbb{R} \times \{0\}$) a {\it upper(or lower) point}. 
\begin{figure}[h!]
\begin{center}
 \includegraphics[width = 6cm]{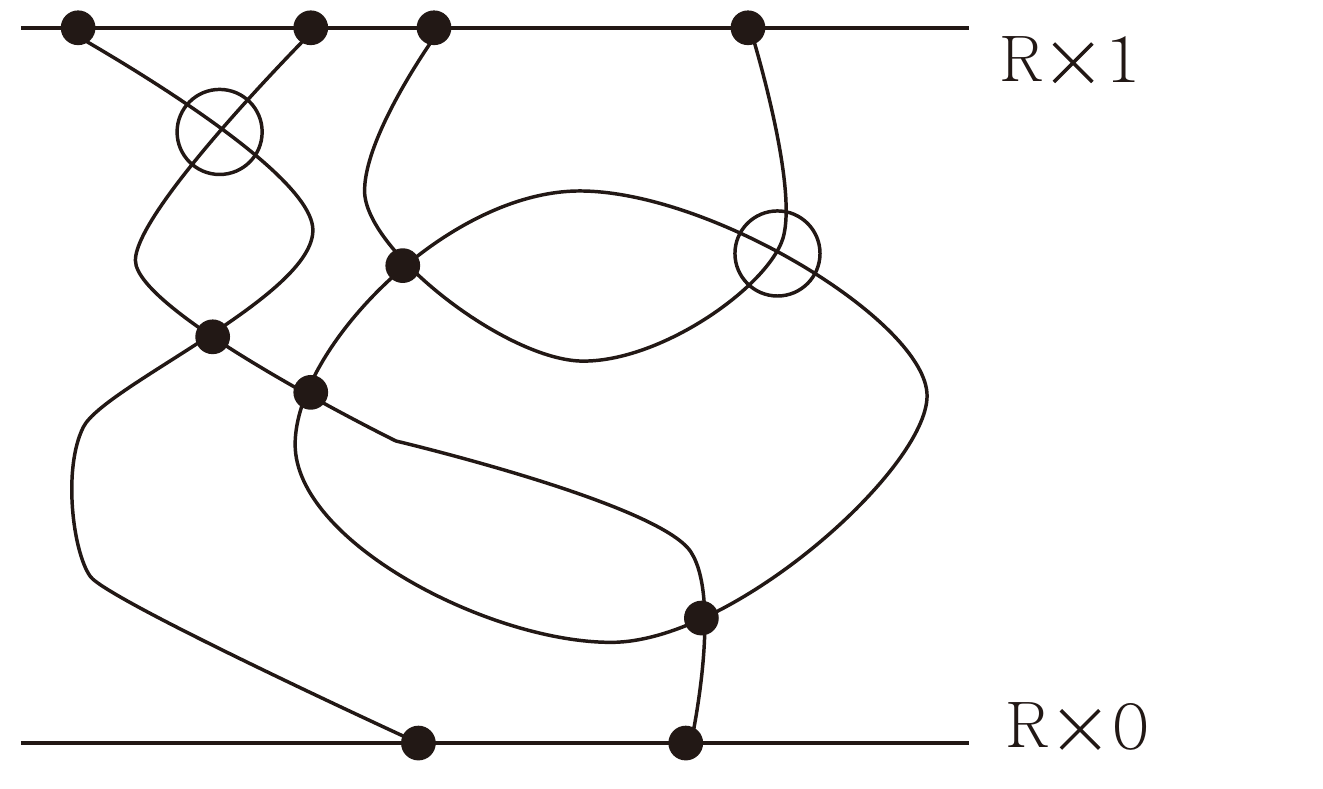}

\end{center}

 \caption{Framed 4-graph with endpoints}\label{exa-sheeted-graph}
\end{figure}

\begin{dfn}
By a {\it unicursal component} of a framed 4-graph with endpoints we mean an equivalence class on the set of edges of the graph: Two edges $e$, $e'$ are equivalent if there exists a collection of edges $e=e_{1}, \cdots, e_{k} = e'$ and a collection of 4-valent vertices $v_{1}, \cdots, v_{k-1}$ (some of them may coincide) of the graph such that edges $e_{i}$, $e_{i+1}$ are opposite to each other at the vertex $v_{i}$. By a {\it unicursal circle} of a framed 4-graph with endpoints we mean a unicursal component such that $v_{1} = v_{k-1}$.
\end{dfn}

A {\it virtual tangle diagram}  is a generic immersion of a framed 4-graph with endpoints in $\mathbb{R} \times I$ with each 4-valent vertex endowed with a classical crossing structure and every 1-valent vertex is upper or lower point. A {\it virtual (link) diagram} is a virtual tangle diagram without endpoints.

 A {\it virtual tangle} is an equivalence class of virtual tangle diagrams by usual Reidemeister moves in Fig.~\ref{cla-Rmoves} and the {\it detour move}. 
  \begin{figure}[h!]
\begin{center}
 \includegraphics[width = 8cm]{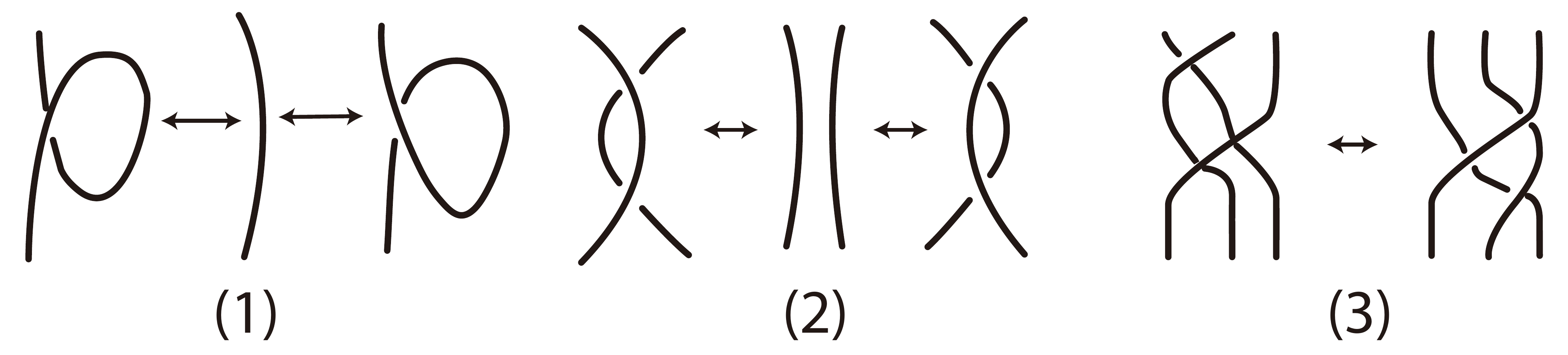}

\end{center}

 \caption{Reidemeister moves}\label{cla-Rmoves}
\end{figure}
 
 The detour move changes the immersion of an edge of the graph: one takes the edge fragment drawn on the plane which has only virtual crossings and redraws it arbitrarily in a generic way with all new crossings specified as virtual, see Fig.~\ref{exa-detour}. 
  \begin{figure}[h!]
\begin{center}
 \includegraphics[width = 8cm]{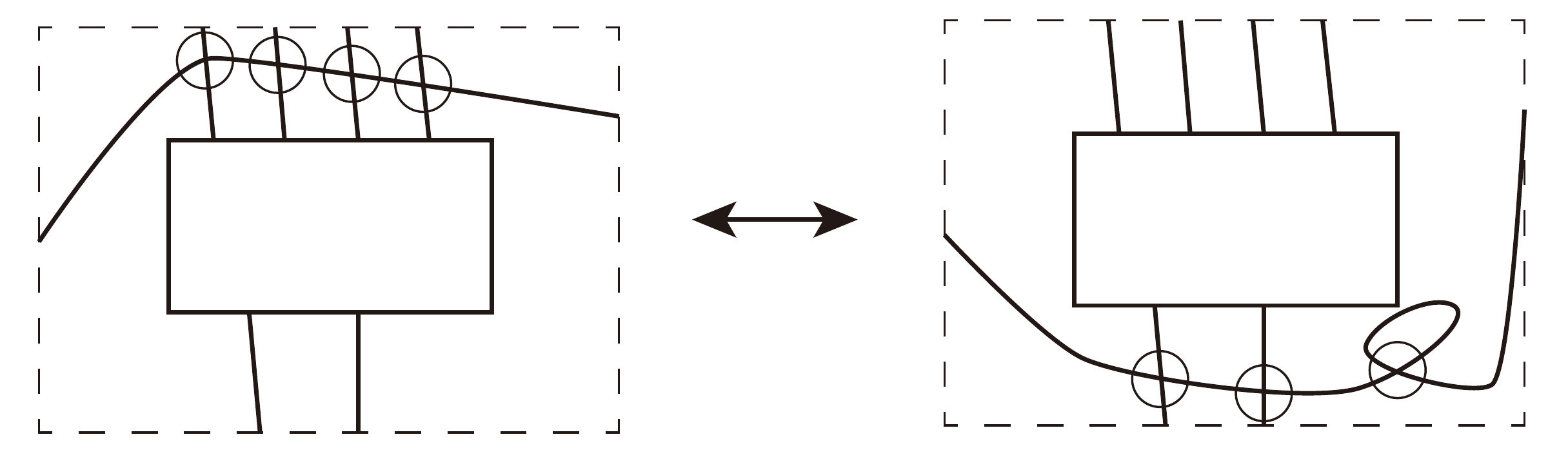}

\end{center}

 \caption{Detour move}\label{exa-detour}
\end{figure}

 A {\it virtual link} is a virtual tangle without endpoints.  A {\it virtual knot} is a virtual link with one unicursal component. 

 
By an {\it $n-n$ virtual tangle}, we mean a virtual tangle with lower points $\{p_{1}^{0}, \cdots, p_{n}^{0}\}$ and upper points $\{p_{1}^{1}, \cdots, p_{n}^{1}\}$ such that each component has end points $p_{i}^{0}$ and $p_{i}^{1}$. Note that an $n-n$ virtual tangle has no unicursal circles.

  If the components of a virtual tangle are numbered and the numbers of components preserve under Reidemeister moves and detour move, then the virtual tangle is {\it enumerated.} In a similar way, we can define enumerated tangle diagrams.
  
 Now let us forget ``over/under" information from virtual links, remembering frame for each classical crossings and roughly speaking, this is a free link. Free links are defined as follows. 

\begin{dfn}
 A {\it free tangle} is an equivalence class of framed 4-graphs with endpoints modulo Reidemeister moves for free diagrams in Fig.~\ref{Rmoves}. A {\it free link} is a free tangle without endpoints. A {\it free knot} is a free link with one unicursal circle. 
 
 By an {\it $n-n$ free tangle}, we mean a free tangle with lower points $\{p_{1}^{0}, \cdots, p_{n}^{0}\}$ and upper points $\{p_{1}^{1}, \cdots, p_{n}^{1}\}$ such that each component has end points $p_{i}^{0}$ and $p_{i}^{1}$. 
 
 If the components of a free tangle are numbered and the numbers of components preserve under Reidemeister moves for free diagrams, then the free tangle is {\it enumerated.} In a similar way, we can define enumerated free tangle diagrams.
 \begin{figure}[h!]
\begin{center}
 \includegraphics[width = 8cm]{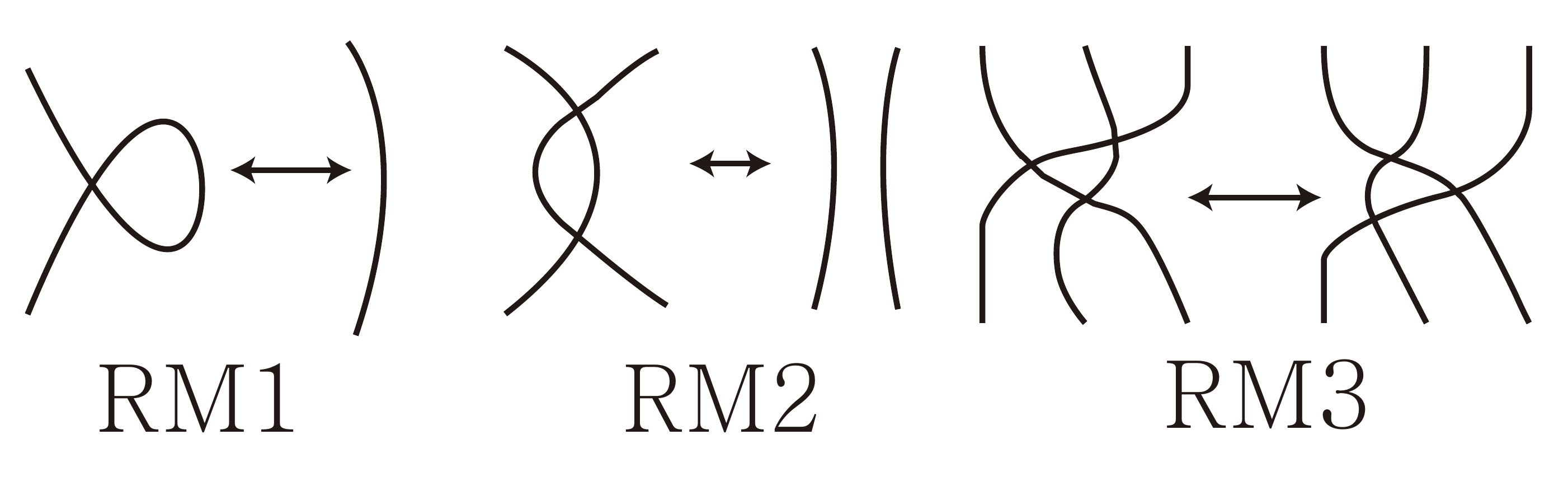}

\end{center}

 \caption{Reidemeister moves for free diagrams}\label{Rmoves}
\end{figure}

\end{dfn}

Free tangles are closely related to flat virtual tangles.
 
 \begin{dfn}
 A {\it flat virtual tangle} is an equivalence class of virtual tangles modulo changing over/under crossing structure. A {\it flat virtual link} is a flat virtual tangle without endpoints. 
 
 By an {\it $n-n$ flat virtual tangle}, we mean a flat virtual tangle with lower points $\{p_{1}^{0}, \cdots, p_{n}^{0}\}$ and upper points $\{p_{1}^{1}, \cdots, p_{n}^{1}\}$ such that each component has end points $p_{i}^{0}$ and $p_{i}^{1}$.
 
 If the components of a flat virtual tangle are numbered and the numbers of components preserve under Reidemeister moves and detour move, then the flat virtual tangle is {\it enumerated.} In a similar way, we can define enumerated flat virtual tangle diagrams. 
 
 By {\it virtualization} we mean a move for flat virtual tangles in Fig.~\ref{Virtualization}.
 
  \begin{figure}[h!]
\begin{center}
 \includegraphics[width = 5cm]{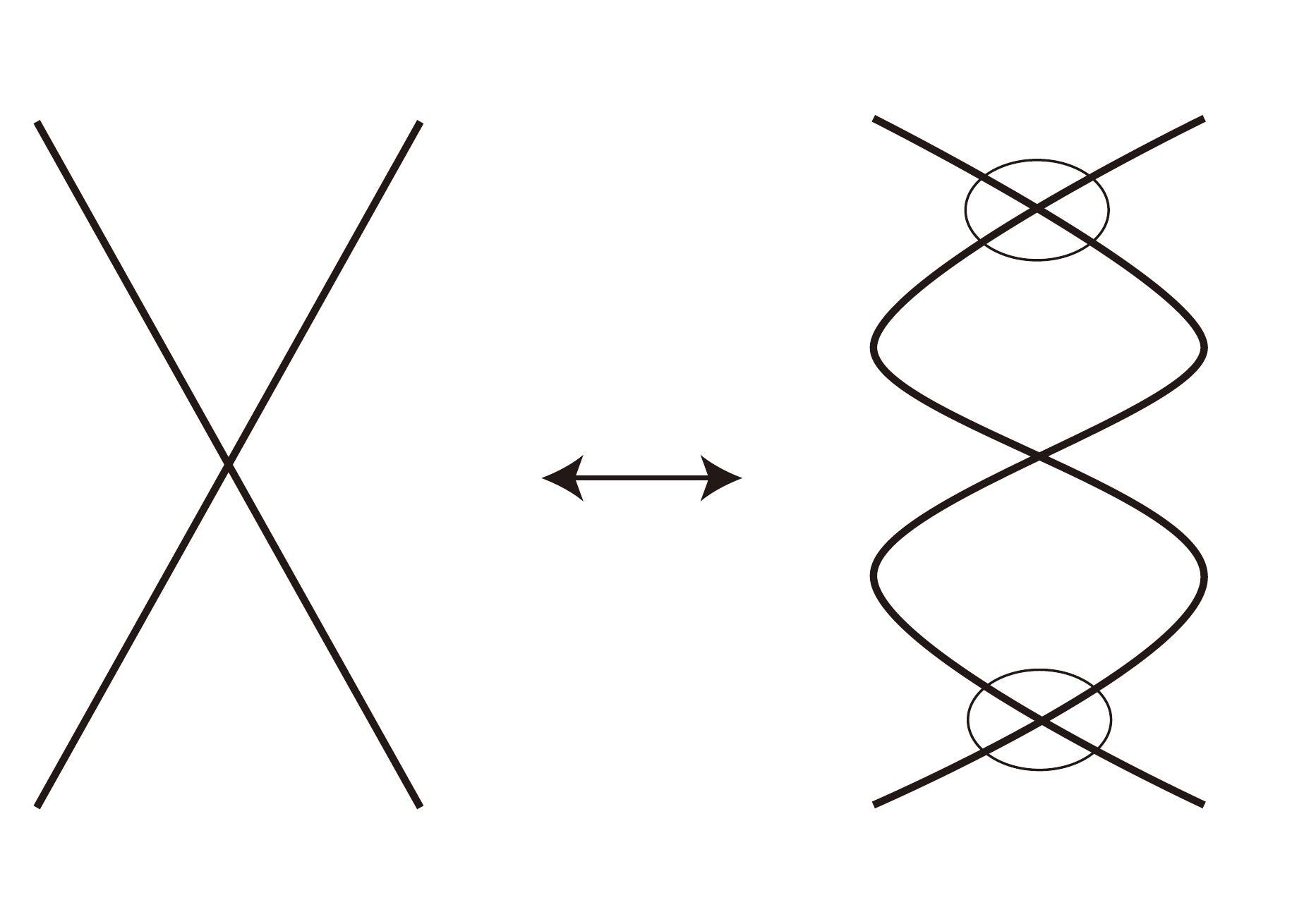}

\end{center}

 \caption{Virtualization}\label{Virtualization}
\end{figure}

\end{dfn}

\begin{rem}
Every notion($n-n$ tangle, link, knot, braid, etc.) is just a partial case of tangles. On the other hand, free tangles($n-n$ tangles, links, knots, etc.) are equivalence classes of flat tangles modulo virualization. 
\end{rem}

\begin{rem}
Equivalence relation for virtual(flat, free) braids should be defined with an extra care because there are in fact two equivalence relations: the one within the category of tangles and the one using braid-like moves.
\end{rem}

The paper is organized as follows: in Section 2, we define a bracket invariant for free tangles and prove Theorem~\ref{Proj-link}. In Section 3, we are going to introduce an invariant for enumerated oriented $n-n$ free tangles and free links valued in free product of $\mathbb{Z}_{2}$ and show an example for which the value of the invariant is not trivial.

\section{Free link diagrams without pure crossings}


\begin{dfn}

Let $L = L_{1} \cup \cdots \cup L_{n}$ be an enumerated free link(or tangle) diagram. A crossing $c$ is called a crossing of {\it type $(i,j)$ } if two arcs of $c$ are part of $L_{i}$ and $L_{j}$ respectively. A classical crossing $c$ of type $(i,i)$ for some $i \in \{1,2, \cdots n\}$ is called a {\it pure crossing}.

\end{dfn}


Let us define an equivalence relation $\cong$ on enumerated free tangle diagrams as follows: two enumerated diagrams $T$ and $T'$ are {\em g-equivalent} if there is a sequence of enumerated free tangle diagrams obtained by applying one of Reidemeister moves for free tangles between $T$ and $T'$ such that each diagram has no pure crossings. Note that the first Reidemeister move for free diagrams cannot be applied. Let $\mathbb{T}$ be the set of equivalence classes of enumerated free tangle diagrams with no pure crossings modulo $\cong$. Consider the linear space $\mathbb{Z}_{2}[\mathbb{T}]$.

We define a bracket for enumerated free links valued in $Z_{2}[\mathbb{T}]$. Let $T$ be an enumerated free tangle diagram with $n$ components. 
Let us consider pure crossings of $T$. 
For each pure crossing, there are two splicings, see Fig.~\ref{splicing}. Herewith, the rest of the diagram remains unchanged. We may then consider further splicings of $T$ at several crossings. The result of splicings $T_{s}$ would have at least $n$ components.
 \begin{figure}[h!]
\begin{center}
 \includegraphics[width = 6cm]{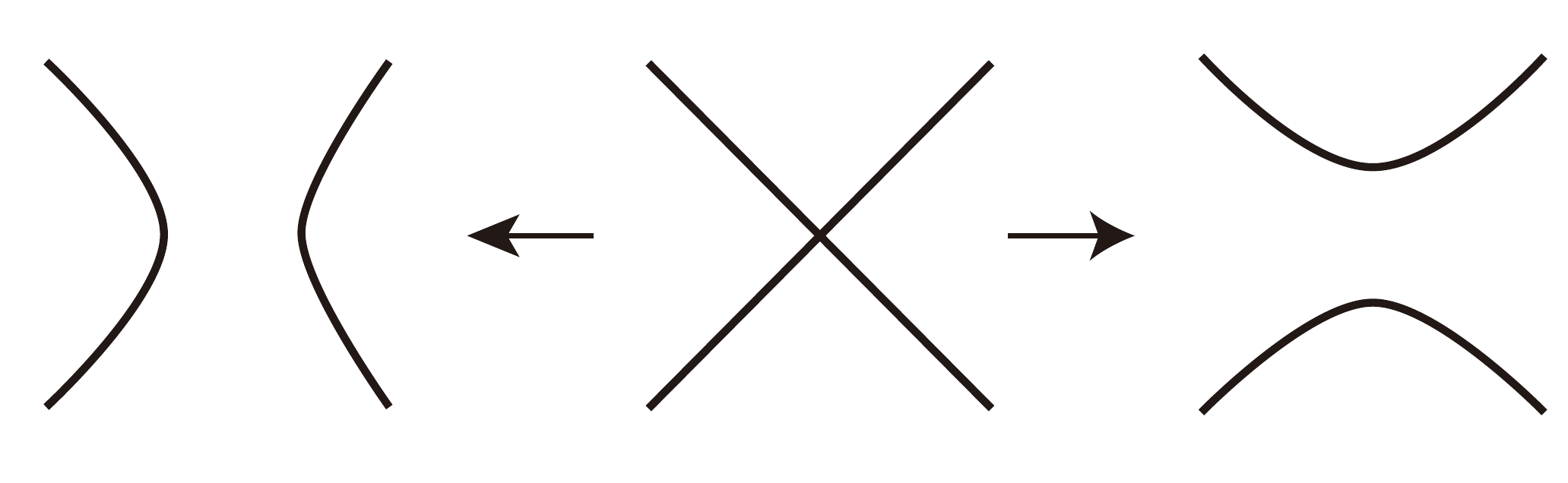}

\end{center}

 \caption{Splicings of a crossing}\label{splicing}
\end{figure}

\begin{rem}
Let $T$ be an enumerated free tangle diagram. If $T_{s}$ has exactly $n$ components, then it is possible to enumerate components which agrees with enumeration of $T$ because we splice only pure crossings and any two different component cannot be connected by the operation. Then if $T_{s}$ has exactly $n$ components, then $T_{s}$ is enumerated such that it agrees with enumeration of $T$. 
\end{rem}

Let us consider the following sum
\begin{center}
\begin{equation}
[T] = \Sigma_{\{s-pure,n~comp.\}} T_{s} \in \mathbb{Z}_{2}[\mathbb{T}]
\end{equation}
\end{center}

 which is taken over all splicings in all pure crossings, and only those summands are taken into account where $T_{s}$ has exactly $n$ componens. Thus if $T$ has $m$ pure crossings, then $[T]$ will contain at most $2^{m}$ summands, and if $T$ has no pure crossings, then we shall have exactly one summand. This bracket is similar to the parity bracket which is introduced in~\cite{Ma1}.
 
 \begin{lem}\label{lem-freelink}
The bracket $[ ~\cdot~]$ for enumerated free tangles is an invariant under Reidemeister moves for free diagrams.
\end{lem}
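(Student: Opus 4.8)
The plan is to check invariance of $[\,\cdot\,]$ under each Reidemeister move for free diagrams (the second move, the third move, and the second-named ``non-pure'' version of the first move that is still allowed, as well as detour-type/virtual moves which do not affect classical crossings), by tracking how the set of pure crossings and their splicings changes. Throughout, the key observation is that a Reidemeister move performed on the diagram $T$ is performed inside a small disk $D$, so any pure crossing of $T$ lying outside $D$ persists unchanged, together with both of its splicings; hence the sum over splicings factors as a ``local'' contribution from crossings inside $D$ times a fixed ``global'' configuration outside. So it suffices to analyze, for each move, the finite local picture: list the pure crossings inside $D$ before and after the move, list all their splicings, and pair up the resulting summands, remembering that we only keep summands $T_s$ with exactly $n$ components and that we work over $\mathbb{Z}_2$ so cancellation in pairs is allowed.

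The case analysis runs as follows. First I would handle the moves that involve no classical crossing inside $D$ (purely virtual Reidemeister moves and the detour move): these leave the classical-crossing set untouched, each summand $T_s$ before the move is carried to an $\cong$-equivalent summand after (the local virtual change is itself a legal move among diagrams with no pure crossings, since it introduces no classical crossing at all), so $[T]=[T']$ term by term. Second, the allowed first Reidemeister move: here a $1$-crossing kink is added or removed, but a kink is always a pure crossing of type $(i,i)$, so this is exactly the ``pure'' first move that the definition of $\cong$ forbids — and indeed the lemma's claim of invariance under the free moves must be read as invariance under the moves that are actually applicable, i.e. excluding the pure first move, which is why the excerpt explicitly notes ``the first Reidemeister move for free diagrams cannot be applied.'' I would state this explicitly. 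Third, the second Reidemeister move: the two new crossings are of the same type $(i,j)$; if $i\neq j$ neither is pure, and $T\mapsto T'$ again matches summands bijectively via the same move (which is legal since it creates no pure crossing). If $i=j$, the two new crossings are both pure, and their four combined splicings: one splicing pair reproduces the original strand(s) (a trivial $\cong$-isotopy), another creates a spurious extra circle (raising the component count, hence discarded), and the remaining two are related by a detour-type isotopy; after discarding the high-component ones and cancelling mod $2$, the surviving contribution matches $[T']$.

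Fourth, the third Reidemeister move: the three crossings involved have types determined by the three strands; I would split into subcases according to how many of the three strands lie on the same component. If all three strands are on distinct components, there are no pure crossings in $D$ and $T\cong T'$ directly. If two of the three strands coincide, exactly one of the three crossings is pure, and the single splicing of that crossing turns the third move into a second move applied to the spliced diagram, which we have already handled. If all three strands are on one component, all three crossings are pure, and one checks the eight local splicings: each triple-splicing of the left-hand side is $\cong$ (via isotopy and already-established second-move invariance) to a triple-splicing of the right-hand side, with component counts matching, so the sums agree term by term. The main obstacle I expect is the bookkeeping in this all-pure third-move subcase: one must be careful that splicings which disconnect or reconnect arcs produce exactly the claimed component counts and that the pairing respects the enumeration (using the Remark that an $n$-component splicing inherits a canonical enumeration). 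Once that finite diagram-chase is verified, invariance follows, and since $[T]$ lands in $\mathbb{Z}_2[\mathbb{T}]$ by construction — each summand has no pure crossings, hence represents a class in $\mathbb{T}$ — the proof is complete.
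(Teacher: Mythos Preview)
Your treatment of the first Reidemeister move contains a genuine error. The bracket $[\,\cdot\,]$ is defined on \emph{all} enumerated free tangle diagrams, including those with pure crossings---indeed that is the whole point of the construction: to project arbitrary diagrams down to $\mathbb{Z}_2$-linear combinations of diagrams without pure crossings. The lemma asserts invariance under all three Reidemeister moves for free diagrams, and the first move \emph{does} apply: a kink can be added to or removed from any diagram. The sentence ``the first Reidemeister move for free diagrams cannot be applied'' that you quote belongs to the definition of $g$-equivalence $\cong$ (which lives entirely among diagrams with \emph{no} pure crossings), not to the setting of the lemma. There is no ``non-pure version'' of RM1: a kink is always a crossing of type $(i,i)$, hence always pure.

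The correct argument for RM1, which the paper gives, is short. The kink crossing is pure, so in forming $[T]$ it must be spliced. Of its two splicings, one produces a split-off circle (yielding at least $n+1$ components, hence discarded from the sum), and the other reproduces exactly the diagram with the kink removed. Thus the summands of $[T]$ are in bijection with those of $[T']$, and $[T]=[T']$.

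Your analysis of RM2 and RM3 follows the same case-by-type approach as the paper and is essentially correct. One small point: in the ``two of the three strands coincide'' subcase of RM3 you speak of ``the single splicing of that crossing,'' but the one pure crossing still has \emph{two} splicings to track; both must be matched between the before and after pictures (the paper does this by direct diagram comparison rather than by reducing to RM2).
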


\begin{proof}
Let $T$ and $T'$ be two enumerated free tangle diagrams such that $T'$ is obtained from $T$ by applying one of Reidemeister moves for free diagrams. If $T'$ is obtained by applying the first Reidemeister move for free diagrams, for a crossing which the first Reidemeister move can be applied to, there is the only one way to splice the crossing, otherwise the number of components is more than or equal to $n+1$, see Fig.~\ref{splice-R1}. Hence $[T] = [T']$.
 \begin{figure}[h!]
\begin{center}
 \includegraphics[width = 6cm]{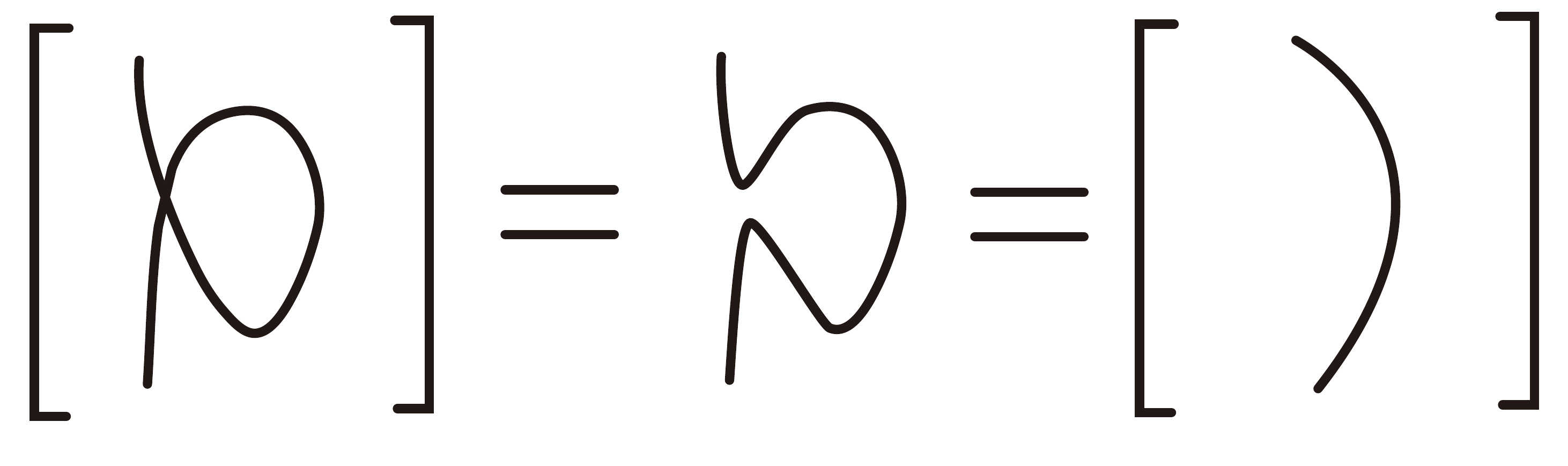}

\end{center}

 \caption{Splicing for RM1}\label{splice-R1}
\end{figure}
 Suppose that $T'$ is obtained from $T$ by applying the second Reidemeister move for free diagrams. 
 If both crossings are mixed, then we don't splice them and  $[T] = [T']$ because $T$ and $T'$ are equivalent in $\mathbb{T}$ and the bracket is defined on $\mathbb{Z}_{2}[\mathbb{T}]$, see Fig.~\ref{splice-R2-1}. 
 
   \begin{figure}[h!]
\begin{center}
 \includegraphics[width = 8cm]{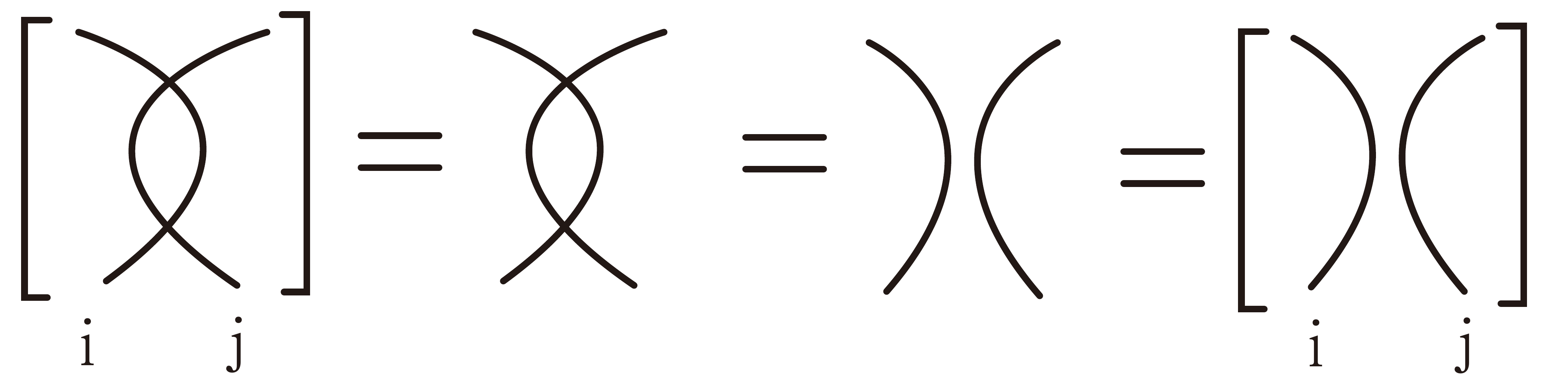}

\end{center}

 \caption{Splicing for RM2 between different components}\label{splice-R2-1}
\end{figure}
If we apply a  second Reidemeister move which involves two pure crossings (see Fig.~\ref{splice-R2}), then two summands (the second one and the third one) cancel each other.
Since we only consider $T_{s}$ with $n$ components, the last diagram of the first line in Fig.~\ref{splice-R2} cannot appear. Therefore  $[T] = [T']$.

 \begin{figure}[h!]
\begin{center}
 \includegraphics[width = 8cm]{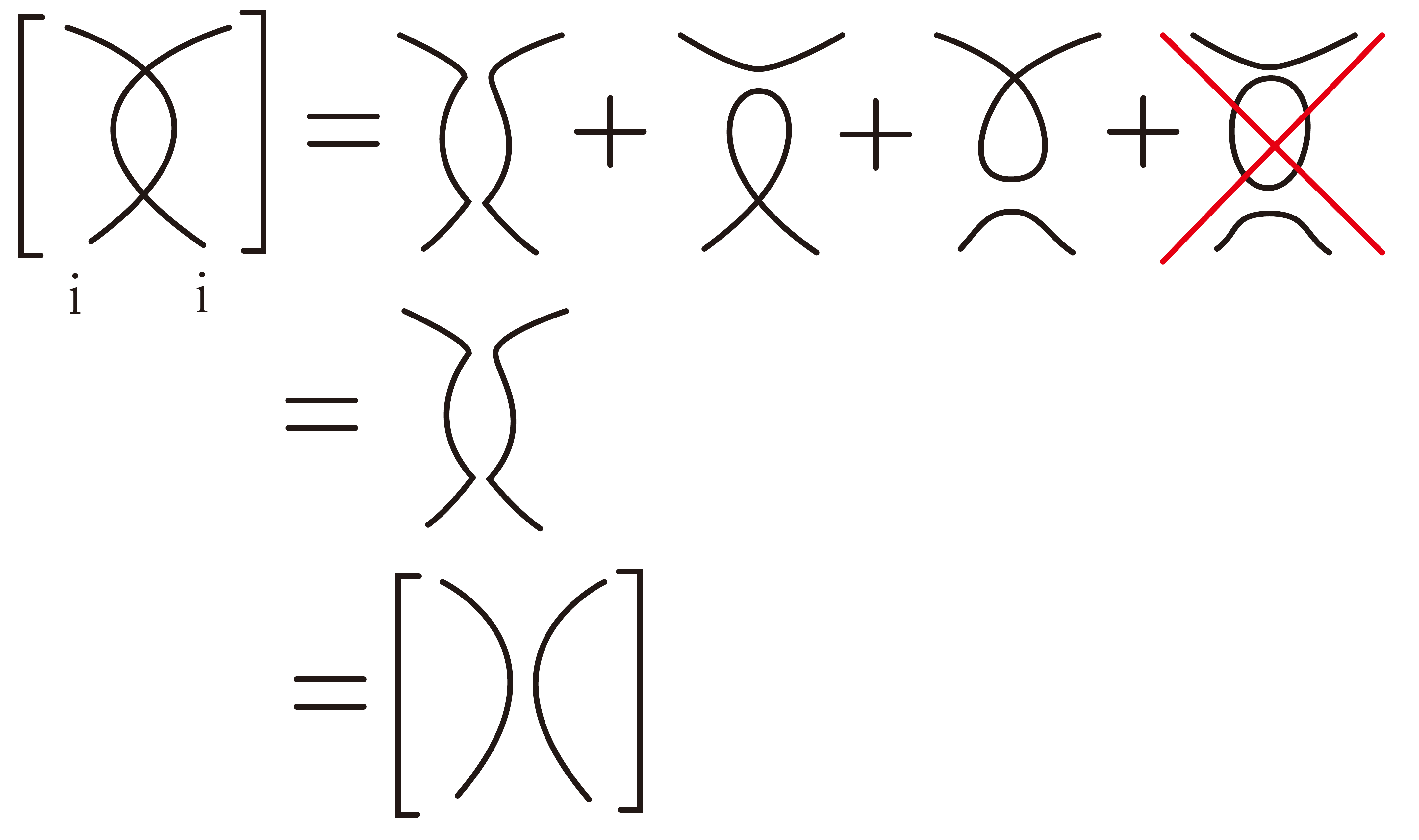}

\end{center}

 \caption{Splicing for RM2 between a component and itself.}\label{splice-R2}
\end{figure}
 Finally, consider the case of the third Reidemeister move for free diagrams, say the move is related to $i,j,k$ components. If $i \neq j \neq k$, then crossings which are contained in the move are not spliced and  $[T] = [T']$ because $T$ and $T'$ are equivalent in $\mathbb{T}$ and the bracket is defined on $Z_{2}[\mathbb{T}]$. In the case of $i=j=k$, we can show that $[T] = [T']$ by Fig.~\ref{splice-R3-1}.(Diagrams in the same color are equivalent in $\mathbb{T}$.) Note that the first diagrams of splicing of $T$ and $T'$ do not happen because they have more than $n$ components.
  \begin{figure}[h!]
\begin{center}
 \includegraphics[width = 8cm]{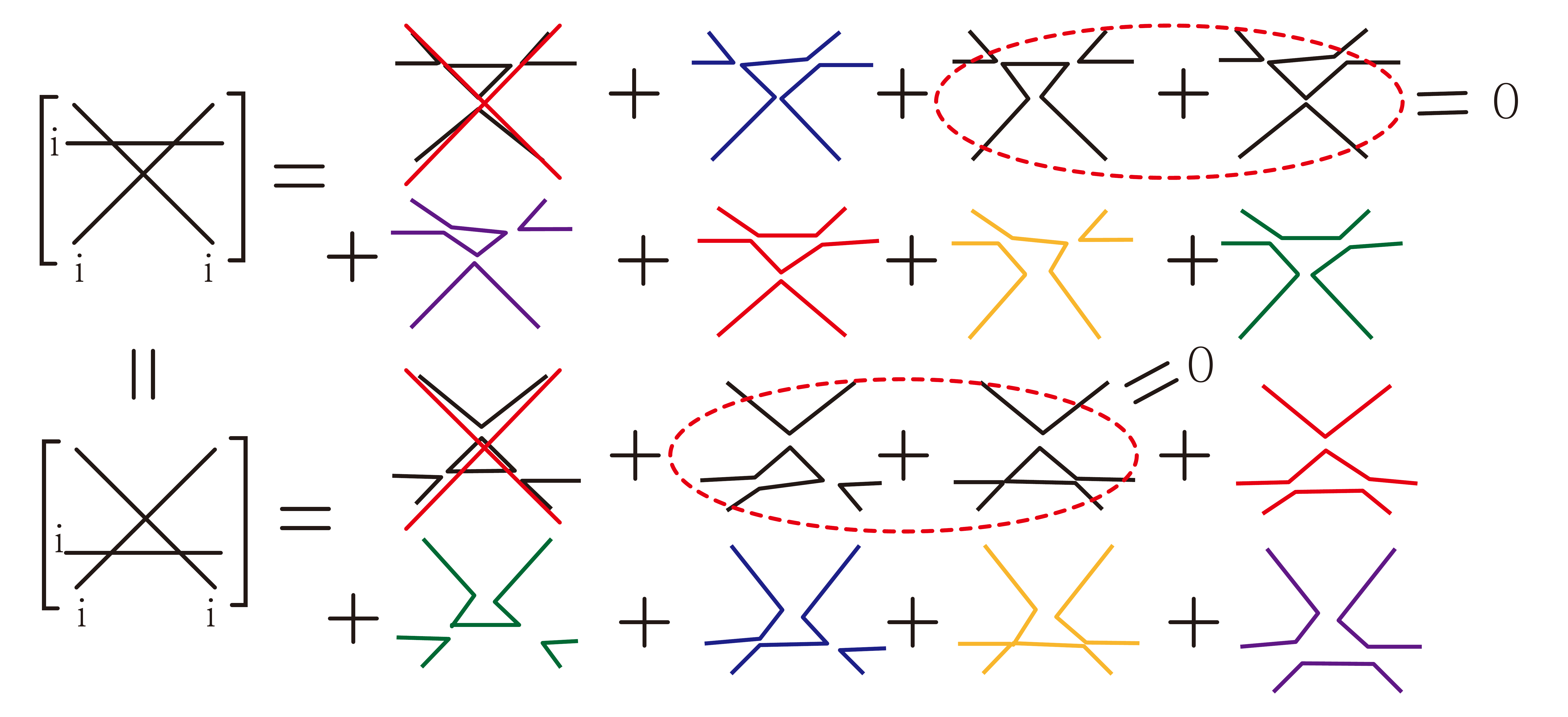}

\end{center}

 \caption{Splicing for the third Reidemeister move, $i=j=k$}\label{splice-R3-1}
\end{figure}
 Now we consider that the case of $i\neq j=k$, see Fig.~\ref{splice-R3}. Since the bracket is valued in $Z_{2}[\mathbb{T}]$, the second diagrams of the first and the second lines in Fig.~\ref{splice-R3} and hence $[T] = [T']$. 
   \begin{figure}[h!]
\begin{center}
 \includegraphics[width = 8cm]{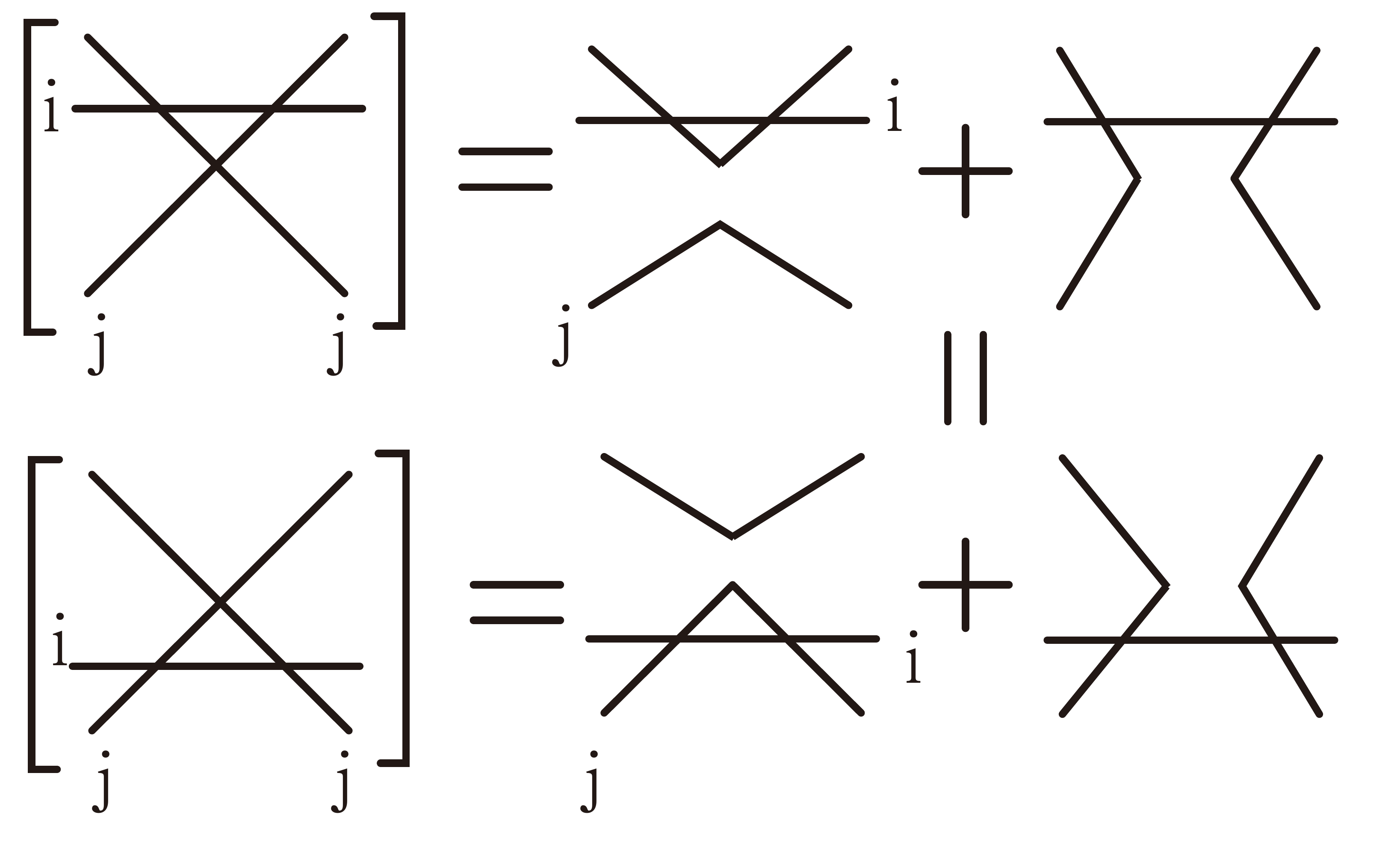}

\end{center}

 \caption{Splicing for the third Reidemeister move, $i\neq j=k$}\label{splice-R3}
\end{figure}

\end{proof}

Since free links are a part of free tangles, we can easily get the following Corollary:
 
 \begin{cor}\label{cor-freetangle}
The bracket $[ ~\cdot~]$ for enumerated free links is an invariant under Reidemeister moves for free diagrams.
\end{cor}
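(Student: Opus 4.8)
The plan is to obtain Corollary~\ref{cor-freetangle} as an immediate specialization of Lemma~\ref{lem-freelink}, so the real work is only to make the inclusion of free links into free tangles precise at the level of the bracket. First I would note that a free link diagram is exactly a framed $4$-graph with endpoints having no $1$-valent vertices — equivalently a free $0$-$0$ tangle diagram, all of whose components are unicursal circles — and that the Reidemeister moves for free link diagrams are literally the moves of Fig.~\ref{Rmoves}. Since no Reidemeister move creates or destroys a $1$-valent vertex, a $g$-equivalence (a sequence of moves through diagrams with no pure crossings) starting from a link diagram passes only through link diagrams; hence the relation $\cong$ restricted to enumerated link diagrams with no pure crossings coincides with the relation induced from $\mathbb{T}$, and these classes form a subset $\mathbb{L}\subseteq\mathbb{T}$ with $\mathbb{Z}_{2}[\mathbb{L}]\subseteq\mathbb{Z}_{2}[\mathbb{T}]$. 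Also ``pure crossing'' means ``classical crossing of type $(i,i)$'', which refers to the unicursal components, not to endpoints, so it is the same notion for links as for tangles.

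Next I would check that the bracket construction is unchanged under this identification: for an enumerated free link diagram $L=L_{1}\cup\cdots\cup L_{n}$ one splices its pure crossings in all ways, retains exactly those resolutions $L_{s}$ with $n$ components (which are enumerated compatibly with $L$ by the Remark following the definition of type $(i,j)$, since splicing a pure crossing never joins two distinct components), and sets $[L]=\sum L_{s}\in\mathbb{Z}_{2}[\mathbb{T}]$ — in fact in $\mathbb{Z}_{2}[\mathbb{L}]$, though that refinement is not needed. This is word for word the definition given for tangles. Consequently the invariance argument in the proof of Lemma~\ref{lem-freelink} applies verbatim: the RM1 case (only one resolution keeps $n$ components, Fig.~\ref{splice-R1}), the RM2 cases (two mixed crossings are not spliced, Fig.~\ref{splice-R2-1}; two pure crossings produce a cancelling pair of summands, Fig.~\ref{splice-R2}), and the RM3 cases (Fig.~\ref{splice-R3-1} for $i=j=k$ and Fig.~\ref{splice-R3} for $i\neq j=k$, the all-distinct case being trivial) all read off the same pictures, now with every strand lying on a unicursal circle. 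Therefore $[L]=[L']$ whenever $L'$ is obtained from $L$ by a Reidemeister move for free diagrams.

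I do not expect a genuine obstacle here; the only point requiring any care is the compatibility of the two equivalence relations and of the two meanings of ``pure crossing'' described above, and both are forced by the observation that none of these notions sees the $1$-valent vertices. An alternative, more self-contained route would be to re-run the four-case analysis of Lemma~\ref{lem-freelink} directly on link diagrams, but this would merely reproduce the figures already used there, so quoting the lemma is the cleaner option.
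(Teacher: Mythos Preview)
Your proposal is correct and follows exactly the paper's approach: the paper simply remarks that free links are a special case of free tangles and immediately deduces the corollary from Lemma~\ref{lem-freelink}. Your write-up is just a careful unpacking of that one-line reduction.
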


We are now going to prove the following Theorem:
\begin{thm}\label{Proj-link}
If two enumerated free tangle diagrams with no pure crossings are equivalent then they are g-equivalent.\end{thm}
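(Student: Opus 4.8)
The plan is to use the bracket invariant $[\,\cdot\,]$ from Corollary~\ref{cor-freetangle} as the bridge between the ``large'' equivalence relation (all Reidemeister moves for free diagrams) and the ``small'' one ($\cong$, i.e. $g$-equivalence through diagrams with no pure crossings). The key observation is that when $T$ has no pure crossings, the sum defining $[T]$ collapses to a single summand, namely $T$ itself, viewed as an element of $\mathbb{T}$. So for diagrams without pure crossings, $[T] = T \in \mathbb{Z}_2[\mathbb{T}]$ literally recovers the $\cong$-class. Therefore, if $T$ and $T'$ are two pure-crossing-free diagrams that are equivalent as free tangles, then by the invariance statement $[T] = [T']$ in $\mathbb{Z}_2[\mathbb{T}]$; but the left side equals the $\cong$-class of $T$ and the right side equals the $\cong$-class of $T'$, so these classes coincide, which is exactly the assertion that $T$ and $T'$ are $g$-equivalent.

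First I would spell out carefully that $[T]$, for a diagram with no pure crossings, is a single basis element of $\mathbb{Z}_2[\mathbb{T}]$ and that this basis element is by definition the $\cong$-equivalence class $[T]_{\cong}$; here one must check that a pure-crossing-free diagram indeed represents an element of $\mathbb{T}$ (it has no pure crossings, and since RM1 cannot create the relevant configuration, the class is well-defined). Second, I would invoke Corollary~\ref{cor-freetangle}: since $T$ and $T'$ are related by a finite sequence of Reidemeister moves for free diagrams (possibly passing through diagrams that do have pure crossings, and possibly with more components after splicing — that is allowed, the moves happen in the ambient category), we get the equality $[T] = [T']$ in $\mathbb{Z}_2[\mathbb{T}]$. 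Third, I would combine the two: $[T]_{\cong} = [T] = [T'] = [T']_{\cong}$ as elements of $\mathbb{Z}_2[\mathbb{T}]$, and since distinct basis elements are linearly independent, $[T]_{\cong} = [T']_{\cong}$ in $\mathbb{T}$, i.e.\ $T \cong T'$, which is the definition of $g$-equivalence.

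The main subtlety — and the place where the argument must be handled with care rather than the place where it is hard — is the bookkeeping in the reduction step of Lemma~\ref{lem-freelink}, which has already been carried out: one needs that the summands in $[T]$ with exactly $n$ components behave correctly under each of RM1, RM2, RM3, including the cancellations in $\mathbb{Z}_2$ and the fact that splicings producing $\geq n+1$ components are simply discarded. Granting that lemma (and its corollary), the proof of Theorem~\ref{Proj-link} itself is the short three-line deduction above; the only thing to be vigilant about is that ``equivalent'' in the hypothesis means equivalent in the full category of free tangles (intermediate diagrams unrestricted), whereas the conclusion lives in the restricted category, and it is precisely the invariance of $[\,\cdot\,]$ under the full move set that lets us cross that gap.
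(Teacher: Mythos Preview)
Your proposal is correct and follows essentially the same argument as the paper: observe that for a pure-crossing-free diagram $T$ the bracket reduces to the single basis element $T\in\mathbb{Z}_2[\mathbb{T}]$, invoke invariance of the bracket under all Reidemeister moves for free diagrams to get $[T]=[T']$, and conclude $T\cong T'$. One small correction: since the theorem concerns tangles, you should cite Lemma~\ref{lem-freelink} (invariance for tangles) rather than Corollary~\ref{cor-freetangle} (which specializes to links); the labels in the paper are somewhat misleadingly named.
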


\begin{proof}
For two enumerated free tangle diagrams $T$ and $T'$ with no pure crossings if $T'$ can be obtained from $T$ by Reidemeister moves for free diagrams, then $[T] = [T']$. Since $T$ and $T'$ have no pure crossings, $T = [T] = [T'] = T'$. By definition of $[~\cdot~]$, $T \cong T'$ and the theorem is proved. 

\end{proof}

\section{Free link invariant valued in group}
In the present section, we are going to construct the main invariant. For free $n-n$ tangles it is valued in the free group, hence, is very easy to calculate. When passing to free links, we get some equivalence classes which actually resume to conjugations and permutations of indices. 
\begin{dfn}
 Let $L = L_{1} \cup L_{2} \cup \cdots \cup L_{n}$ be an enumerated free link(or tangle) diagram. If the number of crossings of type $(i,j)$ is even for all $i,j$ such that $i \neq j$, then we call $L$ a diagram \textit{in a good condition.}
\end{dfn}

Let $T = T_{1} \cup T_{2} \cup \cdots \cup T_{n}$ be an enumerated $n-n$ free tangle diagram in a good condition without pure crossings. Let us orient the tangle $T$ from $\mathbb{R} \times \{0\}$ to $\mathbb{R} \times \{1\}$. Denote boundary point of $T_{i}$ in  $\mathbb{R} \times \{0\}$ and $\mathbb{R} \times \{1\}$ by $p_{i}^{s}$ and $p_{i}^{e}$.

 For each classical crossing $c$ of $T$ of type $(i,j)$ and for $ k \in \{1,2,\cdots, n\} \backslash \{i,j\}$, define $lk^{i}_{c}(k)$ by the sum of number of all crossings of type $(i,k)$ on the $T_{i}$ from the points $p_{i}^{s}$ to the crossing $c$. Define $lk_{c}(k) = lk^{i}_{c}(k) +lk^{j}_{c}(k)$ modulo $\mathbb{Z}_{2}$. Note that $lk_{c}$ can be considered as a map from $\{1,2,\cdots, n \} \backslash \{i,j\} $ to $\mathbb{Z}_{2}$.
 
 Fix $i\neq j, i,j\in \{1,\dots, n\}$. Let $\{c_{1}, \cdots, c_{m}\}$ be the set of classical crossings of type $(i,j)$ such that for each $k , l \in \{1, 2, \cdots, m\}$, $k < l$ if and only if we meet $c_{k}$ earlier than $c_{l}$ when we follow $i$-th component from the point $p_{i}^{s}$. For example, in Fig.~\ref{exa-enu-cro}, there are 4 crossings of type $(1,2)$. With respect to the first component they are enumerated by $\{c_{1},c_{2},c_{3},c_{4}\}$. With respect to the second component they are enumerated by $\{d_{1},d_{2},d_{3},d_{4}\}$.

  \begin{figure}[h!]
\begin{center}
 \includegraphics[width = 6cm]{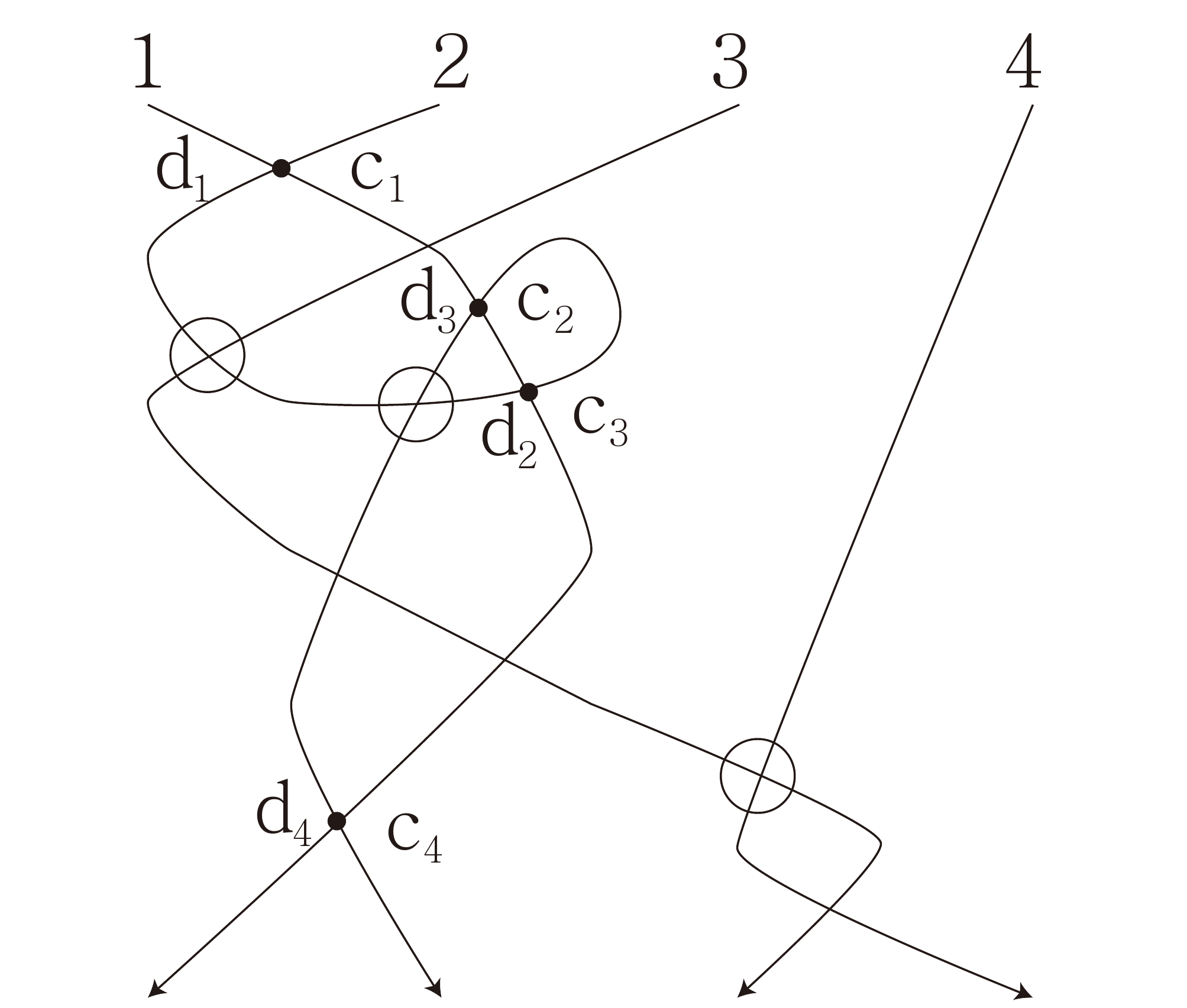}

\end{center}

 \caption{}\label{exa-enu-cro}
\end{figure}

 Define a group presentation $G_{n}^{(i,j)}$ generated by $\{ \sigma ~|~ \sigma : \{1,2,\cdots n\} \backslash \{i,j\} \rightarrow \mathbb{Z}_{2} \}$ with relations $\{ \sigma^{2} = 1\}$. Let us define an action of $aut(G_{n}^{(i,j)})$ on $G_{n}^{(i,j)}$ by $f \cdot g = f(g)$ for $f \in aut(G_{n}^{(i,j)})$ and for $g \in G_{n}^{(i,j)}$. For $l \in \{1,\cdots,n\} \backslash \{i,j\}$, let $f_{l}$ be an automorphism on $G_{n}^{(i,j)}$ such that 

\begin{center}
$f_{l}(\sigma)(x) = \left\{
\begin{array}{cc} 
     \sigma(x) & \text{if}~x \neq l \\
       \sigma(x)+1
 &  \text{if}~x=l. \\
   \end{array}\right.$

\end{center}
Define two word $w$ and $w'$ in $G_{n}^{(i,j)}$ are {\it slide-equivalent} in $G_{n}^{(i,j)}$ if $w' = f \cdot w$ for some $ f $ in the subgroup $< \{ f_{l} \}_{l \in \{1,2, \cdots m\}} > $ of $ aut(G_{n}^{(i,j)})$.

 Note that $G_{n}^{(i,j)} \cong_{h} \mathbb{Z}_{2}^{*2^{n-2}}$ where $h$ is an isomorphism from $G_{n}^{(i,j)}$ to $ \mathbb{Z}_{2}^{*2^{n-2}}$. By definition, each $lk_{c}$ is one of the generators of $G_{n}^{(i,j)}$. Define a word $w^{i}_{(i,j)}(T)$ in $G_{n}^{(i,j)}$ for $T$ by $w^{i}_{(i,j)}(T) = lk_{c_{1}}lk_{c_{2}} \cdots lk_{c_{m}}$. Denote $\sigma$ by a $n-2$ tuple $(\sigma(k))_{k \in \{1,2,\cdots n \} \backslash \{i,j\}}$. Let $\mathbb{Z}_{2}^{\sigma}$ be the subgroup of $ \mathbb{Z}_{2}^{*2^{n-2}}$ generated by $h(\sigma)$.  
 
 Note that it is easy to get a word $w_{(i,j)}^{i}$ from $T$ because to calculate the word we just count the number of crossings in specific types modulo $\mathbb{Z}_{2}$. Moreover, since $w_{(i,j)}^{i}$ is valued in the free product of $\mathbb{Z}_{2}$, that is, there are no relations except for $a^{2} =1$, it is easy to distinguish two words.

\begin{lem}\label{inv-word-tangle}
 For a positive integer $n$ and for $i, j \in \{1, \cdots ,n\}$ such that $ i \neq j$, $w^{i}_{(i,j)}$ is invariant for oriented enumerated $n-n$ free tangles in a good condition without pure crossings.
\end{lem}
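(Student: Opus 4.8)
The strategy is to check invariance of $w^{i}_{(i,j)}$ under each Reidemeister move for free diagrams, using the fact (established in Theorem~\ref{Proj-link}) that if $T$ and $T'$ are enumerated free tangle diagrams with no pure crossings which are equivalent, then they are g-equivalent; so it suffices to control what happens under a single move in a sequence of diagrams none of which has a pure crossing. Since the first Reidemeister move is never applicable (it would create a pure crossing), only RM2 and RM3 need to be analyzed, and within each I would split into cases according to the types $(i',j')$ of the participating crossings relative to the fixed pair $(i,j)$.

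First I would treat the second Reidemeister move. If neither of the two crossings created or destroyed is of type $(i,j)$, then the ordered list $c_{1},\dots,c_{m}$ of type-$(i,j)$ crossings along $T_{i}$ is unchanged, but the values $lk_{c_k}(\ell)$ may change: passing an arc of $T_{i}$ over/under an arc of $T_{\ell}$ twice in an RM2 move either leaves all counts $lk^{i}_{c_k}(\ell)$ fixed (the move occurs after all the $c_k$, or before all of them on the $i$-strand) or adds $2\equiv 0$ to some of them. Here I must be careful: if the RM2 bigon lies between $c_k$ and $c_{k+1}$ along $T_i$, the count $lk^{i}_{c_{k'}}(\ell)$ for $k'>k$ gets $+2$, which is $0$ in $\mathbb{Z}_2$, so $lk_{c_{k'}}$ is unchanged; similarly the $lk^{j}$-contributions are affected only by $+2$ or $0$. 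Hence $w^{i}_{(i,j)}(T)=w^{i}_{(i,j)}(T')$ on the nose in this case. If the two removed/added crossings are both of type $(i,j)$, they are consecutive in the ordering (they form a bigon, so nothing of $T_i$ separates them along $T_i$), say $c_k,c_{k+1}$, and they have equal $lk$-values $lk_{c_k}=lk_{c_{k+1}}=\sigma$; since $\sigma^2=1$ in $G_n^{(i,j)}$, deleting the factor $\sigma\sigma$ does not change the word. The one mixed subcase — one crossing of type $(i,j)$ and one of some other type — cannot occur in a genuine RM2, since both crossings of an RM2 bigon involve the same two strands.

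Next I would treat the third Reidemeister move, which permutes three strands of types $i',j',k'$ and replaces three crossings by three crossings without changing crossing types as multisets. If none of the three strands is both of them, fewer than one type-$(i,j)$ crossing is involved and the relevant counts change by multiples of $2$, so $w^i_{(i,j)}$ is unchanged. The interesting case is when two of the three strands are $T_i$ and $T_j$ and the third is some $T_\ell$ with $\ell\notin\{i,j\}$: then exactly one of the three crossings is of type $(i,j)$, call it $c$; its position in the ordering $c_1,\dots,c_m$ is unchanged by the move, but the set of type-$(i,\ell)$ crossings met along $T_i$ before $c$, and the set of type-$(j,\ell)$ crossings met along $T_j$ before $c$, may each change by one — precisely because the RM3 move slides the $\ell$-strand past the $(i,j)$-crossing. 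This is exactly where the automorphisms $f_\ell$ enter: the move replaces $lk_c=\sigma$ by $f_\ell(\sigma)$, i.e. flips the $\ell$-coordinate, while leaving all other $lk_{c_{k}}$ fixed. However — and this is the subtle point — the statement of the lemma claims $w^i_{(i,j)}$ is \emph{invariant}, not merely slide-invariant; so I would need to argue that in this configuration the $\ell$-strand actually passes the $(i,j)$-crossing an even number of ``net'' times across the whole move, OR reconcile this with how the parity counts $lk^i_c(\ell)+lk^j_c(\ell)$ combine. The key computation is that an RM3 of this type changes $lk^i_c(\ell)$ by $1$ and $lk^j_c(\ell)$ also by $1$ (the $\ell$-strand crosses both $T_i$ and $T_j$ once more, or once fewer, on the respective sides of $c$), so $lk_c(\ell)=lk^i_c(\ell)+lk^j_c(\ell)$ changes by $2\equiv 0$. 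Thus $lk_c$ is genuinely unchanged, and $w^i_{(i,j)}(T)=w^i_{(i,j)}(T')$.

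\textbf{Main obstacle.} The delicate part is the bookkeeping in the last RM3 case: one must verify that the definition $lk_c=lk^i_c+lk^j_c$ with the sum taken in $\mathbb{Z}_2$ is exactly what makes the otherwise-nontrivial change cancel, and that the orientation conventions (reading each strand from $p^s$ to $p^e$) are used consistently so that the two $+1$ contributions genuinely have the same sign mod $2$ rather than being $+1$ and $-1$ in a way that would already cancel trivially or fail to cancel. I would organize this by drawing the local picture of the RM3 disk, labelling which of the three strands is $T_i$, $T_j$, $T_\ell$, and tracking the before/after incidence of $\ell$-crossings on the initial segments of $T_i$ up to $c$ and of $T_j$ up to $c$; the identity $\Delta lk^i_c(\ell)=\Delta lk^j_c(\ell)=1$ should fall out of the fact that the $\ell$-strand, in sweeping across the triple point, crosses the $i$-arc and the $j$-arc the same number of additional times on the same side of $c$. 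A secondary point to state carefully is why the ``one type-$(i,j)$, one other-type'' subcase is vacuous for RM2 and is handled by the even-count argument for RM3, and why no move can change the \emph{number} $m$ of type-$(i,j)$ crossings except by the RM2-cancellation already discussed.
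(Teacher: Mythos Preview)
Your proposal is correct and follows essentially the same approach as the paper's proof: reduce via Theorem~\ref{Proj-link} to a single RM2 or RM3 between diagrams without pure crossings, check that crossings outside the move keep their $lk$-values (changes are $0$ or $\pm 2$), use $\sigma^{2}=1$ for the RM2 cancellation, and in the key RM3 case with strands $T_{i},T_{j},T_{\ell}$ observe that $lk^{i}_{c}(\ell)+lk^{j}_{c}(\ell)$ changes by an even amount so $lk_{c}$ is unchanged in $\mathbb{Z}_{2}$. Your worry in the ``main obstacle'' paragraph is unfounded: whether the two contributions are $(+1,+1)$, $(-1,-1)$, or $(+1,-1)$ depends on the orientations of $T_{i}$ and $T_{j}$ through the RM3 disk, but in every case the sum is even, which is all that is needed---the paper simply records this as ``changed by $+2$ or $-2$'' without elaboration.
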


\begin{proof}
Let $T$ and $T'$ be oriented enumerated $n-n$ free tangle diagrams in a good condition without pure crossings. Fix a pair $(i,j)$ such that $i \neq j \in \{1,\cdots,n\}$. Suppose that $T$ and $T'$ are equivalent as free tangles. By Theorem~\ref{Proj-link} and by definition of tangles in a good condition, we may assume that $T'$ is obtained from $T$ by applying one of the second and the third Reidemeister moves. The proof consists of three parts:\\

(1) If two crossings $c$ and $c'$ are contained in the second Reidemeister move, then $lk_{c} = lk_{c'}$.\\

(2) If a crossing $c$ in $T$ is not contained in the second and the third Reidemeister moves, then $lk_{c} = lk_{c'}$ where $c'$ is a crossing in $T'$ associated to $c$ with respect to the move.\\

(3) $w^{i}_{(i,j)}$ is an invariant under the second and the third Reidemeister moves.\\

For two crossings $c$ and $c'$ contained in the second Reidemeister move, since there are no other crossings between $c$ and $c'$, by definition of $lk_{c}$, (1) holds. Consider a crossing $c$ in $T$ which is not contained in the second and the third Reidemeister moves. If we apply the second Reidemeister move, then the number of crossings of type $(i,k)$ and $(j,k)$ from $p_{i}^{s}$ to $c$ and from $p_{j}^{s}$ to $c$ is changed by $0$ or $+2$ or $-2$. Since $lk_{c}$ is defined modulo $\mathbb{Z}_{2}$, $lk_{c} = lk_{c'}$. If we apply the third Reidemeister move, then the number of crossings of type $(i,k)$ and $(j,k)$ from $p_{i}^{s}$ to $c$ from $p_{j}^{s}$ to $c$ is not changed. Hence $lk_{c} = lk_{c'}$ and it is proved that (2) holds.

Now we will show that $w^{i}_{(i,j)}$ is an invariant under the second and the third Reidemeister moves. Clearly, $w^{i}_{(i,j)}$ does not change under the second Reidemeister move because of the relations $\{ \sigma^{2} = 1\}$.
Let us consider the third Reidemeister move and crossings of type $(i,j)$ which are contained in the move. Suppose that the third Reidemeister move consists of $x$-th, $y$-th and $z$-th components. Since $T$ does not consist of pure crossings, $x \neq y \neq z$. By assumption, $\{x,y,z\} \cap \{i,j\} =  \{i,j\}$, say $x=i$ and $y=j$. Then the number of crossings of type $(i,z)$ and $(j,z)$ from $p_{i}^{s}$ to $c$ and from $p_{j}^{s}$ to $c$ is changed by $+2$ or $-2$ and it is not changed modulo $\mathbb{Z}_{2}$. Therefore $lk_{c}$ is not changed for each crossing $c$ of type $(i,j)$ and $w^{i}_{(i,j)}$ is invariant under the second and the third Reidemeister moves.

\end{proof}

Now, let us consider an oriented enumerated free link diagram $L$ $=L_{1} \cup \cdots \cup L_{n}$ in a good condition without pure crossings. For each $i \in \{1,\cdots,n\}$ let us fix a point $p_{i}$ on an edge of $L_{i}$ and $p_{i}^{s}$ and $p_{i}^{e}$ in $\mathbb{R} \times \{0\}$ and $\mathbb{R} \times \{1\}$, respectively. We may assume that $p_{i}^{s} < p_{i+1}^{s}$ and $p_{i}^{e} < p_{i+1}^{e}$ for all $i \in \{1,\cdots,n-1\}$ as elements in $\mathbb{R}$. We cut each component $L_{i}$ at $p_{i}$. Then we get two points $p_{i}^{0}$ and $p_{i}^{1}$ such that the orientaion of $L_{i}$ goes from $p_{i}^{0}$ to $p_{i}^{1}$. 
Let us connect two points $p_{i}^{0}$ and  $p_{i}^{1}$ with $p_{i}^{s}$ and $p_{i}^{e}$, respectively. Herewith, we change new intersections to virtual crossings. 
Then we can get an oriented enumerated $n-n$ free tangle diagram $T_{L}$ in a good condition without pure crossings from $L$, see Fig.~\ref{exa-tangle-link}.

 \begin{figure}[h!]
\begin{center}
 \includegraphics[width = 11cm]{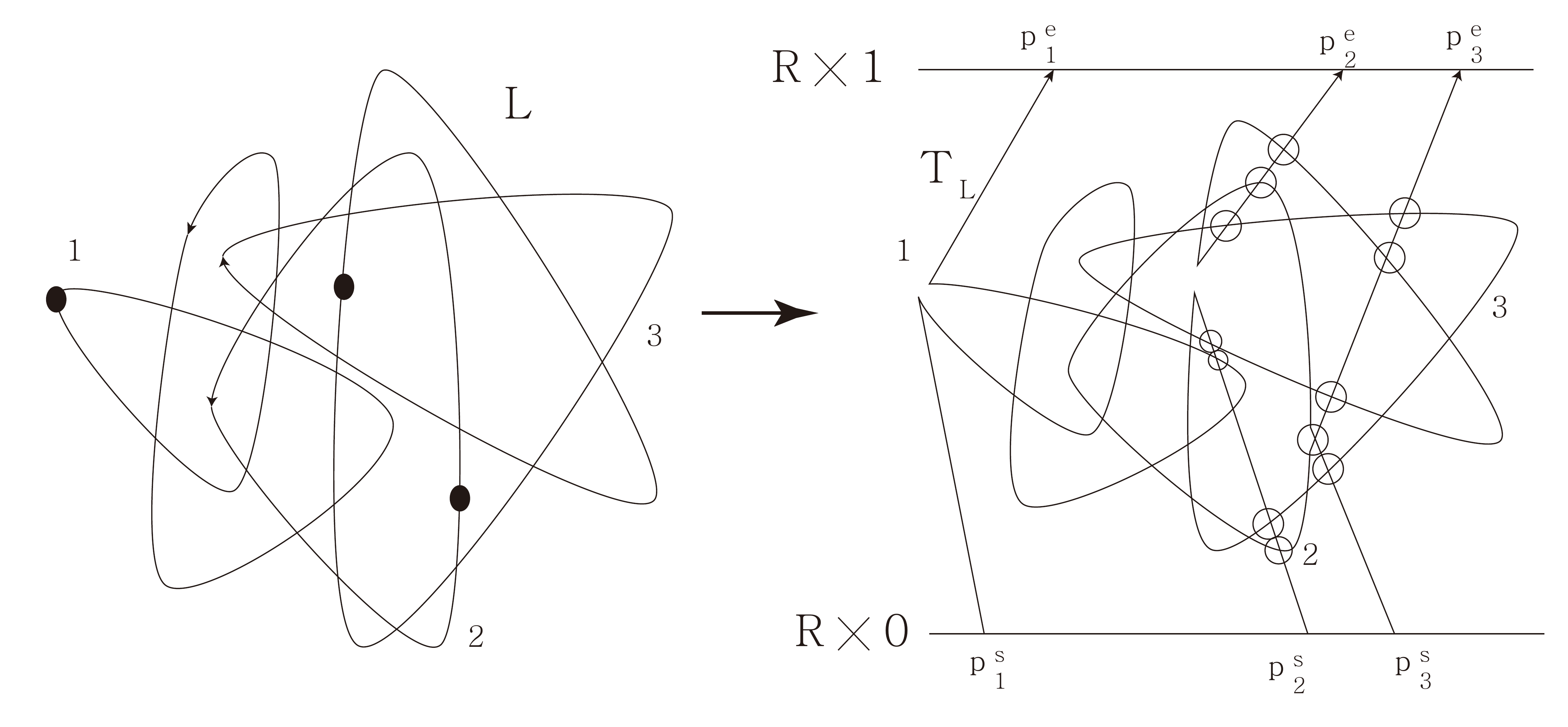}

\end{center}

 \caption{An $n-n$ free tangle from a free link}\label{exa-tangle-link}
\end{figure}

As we already did, for each pair $(i,j)$, we can define $w^{i}_{(i,j)}$. Apparently, the word $w^{i}_{(i,j)}$ does depend on the cut points $\{p_{i}\}_{1}^{n}$. But it does not depend on the points $\{p_{i}^{s},p_{i}^{e}\}$ since every new intersection is considered as a virtual crossing. 

\begin{thm}
Let $L$ $=L_{1} \cup \cdots \cup L_{n}$ be an oriented enumerated free link diagram in a good condition without pure crossings. For each pair $(i,j)$, $w^{i}_{(i,j)}$ is an invariant for oriented enumerated free links in a good condition without pure crossings with respect to slide-equivalence relation in $G_{n}^{(i,j)}$ and conjugation. 
\end{thm}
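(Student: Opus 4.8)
The plan is to split the argument into two independent parts: (1) the dependence of $w^{i}_{(i,j)}$ on the choice of the cut points $\{p_{t}\}_{t=1}^{n}$, and (2) invariance under moves of the link diagram. For part (2) the idea is that, by Theorem~\ref{Proj-link} applied to free links (which follows from Corollary~\ref{cor-freetangle} in the same way Theorem~\ref{Proj-link} follows from Lemma~\ref{lem-freelink}), two equivalent free link diagrams with no pure crossings are connected by a finite sequence of second and third Reidemeister moves for free diagrams, no intermediate diagram having a pure crossing; moreover such moves preserve the good condition, since an RM2 between distinct components $i\neq j$ changes the number of type-$(i,j)$ crossings by $\pm2$ while an RM3 changes no crossing counts (an RM1, or an RM2 of a component with itself, would produce a pure crossing and is therefore excluded). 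For a single such move whose support is disjoint from all cut points and from the connecting virtual arcs, the move is literally a Reidemeister move on the $n-n$ free tangle $T_{L}$, and Lemma~\ref{inv-word-tangle} gives that $w^{i}_{(i,j)}$ is unchanged. A move whose support meets a cut point is handled by first isotoping the cut point off the support — an operation of type (1), which I analyse next.

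For part (1), I would move one cut point $p_{t}$ at a time, and along $L_{t}$ one classical crossing at a time (passing $p_{t}$ across a virtual crossing or along an edge changes nothing). Let $d$ be the crossing that $p_{t}$ is moved past; since there are no pure crossings, $d$ has type $(t,k_{0})$ with $k_{0}\neq t$. The easy cases: (a) if $t\notin\{i,j\}$, then none of the numbers $lk^{i}_{c}(k)$, $lk^{j}_{c}(k)$, nor the cyclic order of the type-$(i,j)$ crossings along $L_{i}$, depends on $p_{t}$, so $w^{i}_{(i,j)}$ is unchanged. (b) If $t=i$ and $k_{0}\notin\{i,j\}$, then $d$ is not a type-$(i,j)$ crossing, so the order of the $c_{s}$ along $L_{i}$ is unchanged, while for every type-$(i,j)$ crossing $c$ the count $lk^{i}_{c}(k)$ shifts by $1$ exactly for $k=k_{0}$; hence each letter $lk_{c_{s}}$ is replaced by $f_{k_{0}}(lk_{c_{s}})$, so $w^{i}_{(i,j)}$ is replaced by $f_{k_{0}}(w^{i}_{(i,j)})$, a slide-equivalent word (with $k_{0}\in\{1,\dots,n\}\setminus\{i,j\}$); the case $t=j$, $k_{0}\notin\{i,j\}$ is identical with $lk^{j}$ in place of $lk^{i}$. (c) If $t=j$ and $k_{0}=i$, then $d$ has type $(i,j)$, but $w^{i}_{(i,j)}$ reads crossings along $L_{i}$, whose order and whose values $lk^{i}_{c}$ are untouched, and since $d$ is not of type $(j,k)$ for any $k\neq i$, no value $lk^{j}_{c}(k)$ with $k\in\{1,\dots,n\}\setminus\{i,j\}$ changes either (for $c\neq d$ the shift is $0$, and for $c=d$ the recomputed count around $L_{j}$ equals the total number of type-$(j,k)$ crossings on $L_{j}$, which is even by the good condition); so $w^{i}_{(i,j)}$ is unchanged.

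The crux is the remaining case $t=i$, $k_{0}=j$, where $d$ is a type-$(i,j)$ crossing. After first applying moves of type (b) to bring $p_{i}$ adjacent to a type-$(i,j)$ crossing, that crossing is $c_{1}$, and from the new cut point the type-$(i,j)$ crossings are read in the cyclic order $c_{2},\dots,c_{m},c_{1}$. For $s\geq2$ the value $lk^{i}_{c_{s}}(k)$ is unchanged, because $d=c_{1}$ has type $(i,j)$ and hence contributes to no count of type-$(i,k)$ crossings with $k\neq j$; thus $lk_{c_{s}}$ is unchanged for $s\geq2$. For $c_{1}$ itself the old value is $lk^{i}_{c_{1}}(k)=0$ ($p_{i}$ was adjacent to $c_{1}$), and the new count runs once around $L_{i}$ and so equals the total number of type-$(i,k)$ crossings on $L_{i}$, which is even by the good condition, hence $0$ in $\mathbb{Z}_{2}$; so $lk_{c_{1}}$ is unchanged as well. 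Therefore the new word is $lk_{c_{2}}\cdots lk_{c_{m}}lk_{c_{1}}=lk_{c_{1}}^{-1}(lk_{c_{1}}lk_{c_{2}}\cdots lk_{c_{m}})lk_{c_{1}}=lk_{c_{1}}^{-1}\,w^{i}_{(i,j)}\,lk_{c_{1}}$, i.e.\ a conjugate of $w^{i}_{(i,j)}$.

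Assembling: any two choices of cut points are connected by finitely many single-crossing moves, so the associated words differ by a composition of slides $f_{k_{0}}$ and conjugations; and any Reidemeister equivalence between two good, pure-crossing-free free link diagrams can be realised by moves whose supports avoid the cut points, under which $w^{i}_{(i,j)}$ is unchanged by Lemma~\ref{inv-word-tangle}. Hence $w^{i}_{(i,j)}$ descends to a well-defined invariant of oriented enumerated free links in a good condition without pure crossings, up to slide-equivalence in $G_{n}^{(i,j)}$ and conjugation. I expect the main obstacle to be case $(t,k_{0})=(i,j)$: getting the cyclic-shift-equals-conjugation computation right and pinpointing that it is precisely evenness of the type-$(i,k)$ crossing counts — the good condition — that makes the recomputed first letter return unchanged; the other cases, and the preservation of the good condition under the permitted moves, are routine bookkeeping.
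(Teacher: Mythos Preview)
Your proof is correct and follows the same two-part strategy as the paper: Reidemeister invariance with fixed cut points via Theorem~\ref{Proj-link} and Lemma~\ref{inv-word-tangle}, then a case-by-case analysis of moving a single cut point past a single crossing. Your case analysis is in fact more complete than the paper's---you explicitly treat moving $p_{j}$ (the paper only discusses $k=i$) and you identify precisely where the good-condition hypothesis is needed to ensure $lk_{c_{1}}$ is unchanged in the conjugation step, which the paper asserts without justification.
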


\begin{proof}
Let $L$ $=L_{1} \cup \cdots \cup L_{n}$ be an oriented enumerated free link diagram in a good condition without pure crossings. Suppose that cut points $\{p_{k}\}_{k=1}^{n}$ are fixed. Let $L'$ be an oriented enumerated free link diagram in a good condition without pure crossings such that $L'$ is obtained from $L$ by a sequence of Reidemeister moves for free diagrams. Then for a pair $(i,j)$, by Theorem~\ref{Proj-link} and Lemma~\ref{inv-word-tangle}, $w^{i}_{(i,j)}(L) = w^{i}_{(i,j)}(L')$. If one of the fixed points $p_{k}$ is moved to another point $p'_{k}$ on $L_{k}$, we may assume that between $p_{k}$ and $p'_{k}$ there is the only one crossing $c$ of type $(k,l)$. Say $\{c_{1}, \cdots, c_{m}\}$ and $\{c'_{1}, \cdots, c'_{m}\}$  are sets of crossings of type $(i,j)$ on $L_{i}$ such that crossings in $\{c_{1}, \cdots, c_{m}\}$ (in $\{c'_{1}, \cdots, c'_{m}\}$) are ordered with respect to the orientation of $L_{i}$ and to $p_{k}$ (to $p'_{k}$). If $k \neq i$ and $k \neq j$, then $w^{i}_{(i,j)}$ is not changed. Suppose that $k =i$. If $l \neq j$, then $c_{a} = c'_{a}$ for all $a  \in  \{1, \cdots, m \}$ and $lk_{c_{a}}(x)$ is changed from $0$ to $1$(or $1$ to $0$), that is, $lk_{c'_{a}} = f_{x}(lk_{c_{a}})$. Therefore $w^{i}_{(i,j)}$and $w'^{i}_{(i,j)}$ are slide-equivalent in $G_{n}^{(i,j)}$.
Suppose that $l = j$ and the orientation of $L_{i}$ go from $p_{i}$ to $p'_{i}$. Note that the crossing $c = c_{1}$ with respect to $p_{i}$ and $c'_{a} = c_{a+1}$ for $a \in \{1, \cdots m-1\}$ and $c'_{m} = c_{1}$. Since $lk_{c'_{a}} = lk_{c_{a+1}}$ for $a \in \{1, \cdots m-1\}$ and $lk_{c'_{m}} = lk_{c_{1}}$, the word $w'^{i}_{(i,j)}$ is $lk^{-1}_{c}w^{i}_{(i,j)}lk_{c}$.

\end{proof}

\begin{exa}

Let us calculate the word $w^{1}_{(1,2)}$. There are four crossings of type $(1,2)$ and $G_{n}^{(1,2)} \cong \mathbb{Z}_{2}^{(0,0)} * \mathbb{Z}_{2}^{(0,1)} * \mathbb{Z}_{2}^{(1,0)} * \mathbb{Z}_{2}^{(1,1)}$. Let the generators of  $\mathbb{Z}_{2}^{(0,0)}$, $\mathbb{Z}_{2}^{(0,1)}$, $\mathbb{Z}_{2}^{(1,0)}$ and $\mathbb{Z}_{2}^{(1,1)}$ be $\alpha$, $\beta$, $\gamma$ and $\delta$, respectively. With respect to the orientation of the first component, give an order of those crossings $c_{1},$ $c_{2},$ $c_{3},$ and $c_{4}$. By some calculations, we get $lk_{c_{1}} = (0,0)$, $lk_{c_{2}} = (0,1)$, $lk_{c_{3}} = (1,1)$, $lk_{c_{4}} = (0,0)$. Therefore $w^{1}_{(1,2)} = \alpha\beta\delta\alpha = \beta\delta$ modulo conjugations. Since the word cannot be reduced to the trivial, this link is not trivial. 
 \begin{figure}[h!]
\begin{center}
 \includegraphics[width = 9cm]{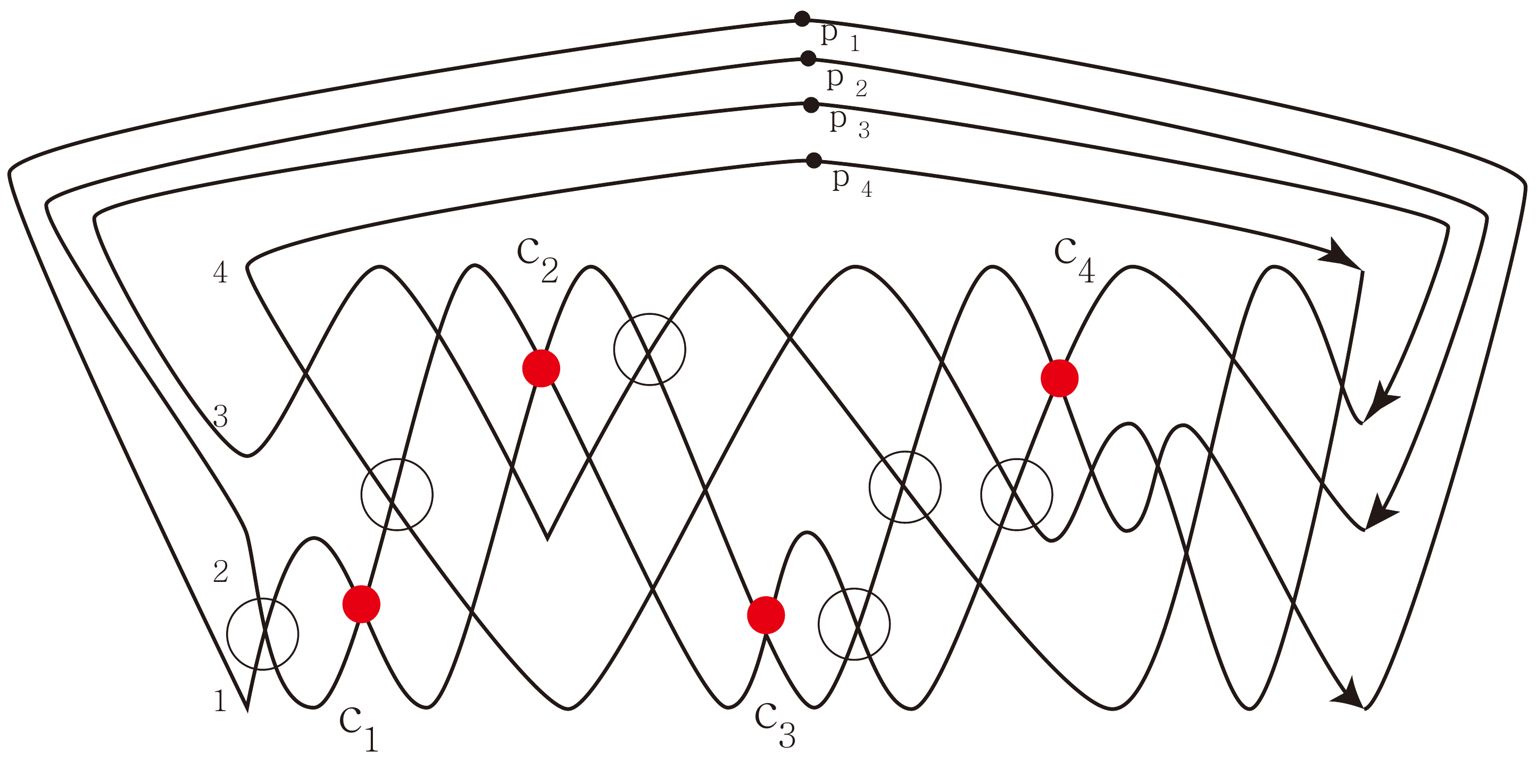}

\end{center}

 \caption{}\label{exa-inv1}
\end{figure}

\end{exa}

\end{document}